\documentclass[11pt]{article}

\usepackage{amsmath,amssymb,amsthm,amsfonts}
\usepackage{mathtools}
\usepackage{mathrsfs}
\usepackage{bm}
\usepackage{graphicx}
\usepackage{float}
\usepackage{multirow}
\usepackage{booktabs}
\usepackage{longtable}
\usepackage{arydshln}
\usepackage{enumerate}
\usepackage{cite}
\usepackage{hyperref}
\usepackage[a4paper,margin=2.5cm]{geometry}

\hypersetup{
	colorlinks=true,
	linkcolor=blue,
	citecolor=blue,
	urlcolor=blue
}

\allowdisplaybreaks[2]

\numberwithin{equation}{section}

\newtheorem{theorem}{Theorem}[section]
\newtheorem{lemma}[theorem]{Lemma}
\newtheorem{proposition}[theorem]{Proposition}

\newtheorem{definition}[theorem]{Definition}

\newtheorem{remark}[theorem]{Remark}

\newcommand{\D}{\mathbb D}

\newcommand{\C}{\mathbb C}

\newcommand{\Hq}{\mathbb H} 
\newcommand{\R}{\mathbb R} 
\newcommand{\B}{\mathbb B}
\newcommand{\Sph}{\mathbb S}
\newcommand{\CI}{\mathbb C_I}
\newcommand{\op}{\operatorname{op}}
\newcommand{\diag}{\operatorname{diag}}
\newcommand{\RI}{\mathscr R_I}
\newcommand{\e}{\mathrm e}

\title{\Large\bf    Corona theorem for the quaternionic Hardy space
	\footnotetext{
		\endgraf Y. Lu was supported by the National Natural Science Foundation of China
		(Grant No. 12031002). C. Zu was supported by the National Natural Science
		Foundation of China (Grant No. 12401151), and the Postdoctoral Researcher
		Foundation of China (Grant No. GZB20240100).
}}

\author{
	Zhaopeng Lin,
	Yufeng Lu,
	Chao Zu\thanks{Corresponding author}
}

\date{}

\begin{document}

  \maketitle
  
  \vspace{-0.8cm}
  
  \begin{center}
  	\begin{minipage}{14cm}\small    {\noindent{\bf Abstract} \quad  
The finite-generator corona theorem for bounded slice regular functions on the quaternionic unit ball was recently established by Colombo, Pozzi, Sabadini, and Wick. In the present paper, we extend the quaternionic \(H^\infty\)-corona theorem to countably many generators and obtain quantitative estimates that are independent of the cardinality of the generating family. We also establish the corresponding \(H^p\)-corona theorem for the full range \(1\leq p<\infty\), with quantitative norm estimates for both finite and countable families of generators. In the Hilbert-space setting, we prove a quaternionic Leech factorization theorem for Hardy-space multipliers and derive, as a consequence, a   Toeplitz corona characterization. Our approach is based on a fixed-slice \(2\times2\) complex matrix realization of the slice regular product, together with operator-valued corona and factorization techniques.  
  			\endgraf 
  			{\bf Mathematics Subject Classification (2020).}\quad
Primary 30G35; Secondary 30H80, 47B35.
  			\endgraf
  			{\bf Keywords.}\quad
  		Corona theorem; slice regular
functions;  Quaternionic Hardy space.}
  	\end{minipage}
  \end{center}
   
\section{Introduction}
\label{sec:introduction}

The corona problem is a fundamental instance in which a topological
question concerning the maximal ideal space of a Banach algebra is
converted into an analytic division problem. Let
\(M(H^\infty(\D))\) denote the maximal ideal space of the Banach
algebra \(H^\infty(\D)\) of bounded holomorphic functions on the unit
disk. Every point \(z\in\D\) determines an evaluation character
\[
\varepsilon_z(f)=f(z),
\qquad f\in H^\infty(\D),
\]
and hence gives a natural embedding
\(\D\hookrightarrow M(H^\infty(\D))\). Kakutani asked whether the
image of \(\D\) is dense in \(M(H^\infty(\D))\). This density question is equivalent to a B\'ezout problem for bounded
analytic functions; see \cite{MR3329539} for a detailed historical
account.

More precisely, for finitely many functions
\(a_1,\ldots,a_N\in H^\infty(\D)\), the relevant analytic question is
whether a uniform pointwise lower bound
\[
0<\delta^2
\leq
\sum_{j=1}^N |a_j(z)|^2
\leq1,
\qquad z\in\D,
\]
forces the existence of
\(b_1,\ldots,b_N\in H^\infty(\D)\) satisfying
\[
\sum_{j=1}^N a_jb_j=1.
\]
Carleson answered this question affirmatively in his celebrated
corona theorem \cite{MR141789}. Beyond settling Kakutani's question,
Carleson's proof introduced geometric and measure-theoretic tools
that became central in complex and harmonic analysis. H\"ormander
subsequently recast the construction in terms of a preliminary
nonholomorphic B\'ezout solution followed by a
\(\bar\partial\)-correction organized through the Koszul complex
\cite{MR226387}. An important advantage of H\"ormander's approach was that it treated
the B\'ezout equation directly for an arbitrary finite number of
generators.

For countably many generators, Rosenblum \cite{MR570865} and
Tolokonnikov \cite{MR595742} independently proved that the classical
corona theorem remains valid under the normalization
\[
\delta^2
\leq
\sum_{j=1}^\infty |a_j(z)|^2
\leq1.
\]
In this setting the data may be regarded as the analytic row operator
\[
A(z):\ell^2\longrightarrow\C,
\qquad
A(z)c=\sum_{j=1}^\infty a_j(z)c_j,
\]
and a corona solution as a bounded analytic column
\(B=(b_j)_{j\geq1}^{\mathsf T}\) satisfying \(AB=1\).
Rosenblum's proof explicitly develops this operator-theoretic
viewpoint by reducing the problem to an estimate for an associated
Hilbert-space transformation \cite{MR570865}.

The classical corona problem also admits matrix-valued and
operator-valued extensions. The finite matrix case was studied by Fuhrmann
\cite{MR222701}, while Trent and Zhang developed matricial corona
theorems for more general function algebras
\cite{MR2213732,MR2317961}. Let \(E\) and \(E_*\) be complex Hilbert spaces, and let
\(\mathcal B(E,E_*)\) denote the bounded linear operators from
\(E\) to \(E_*\). Suppose that $
A\in H^\infty\bigl(\D,\mathcal B(E,E_*)\bigr)$ 
satisfies
\[
\delta^2 I_{E_*}
\leq
A(z)A(z)^*
\leq
I_{E_*},
\qquad z\in\D,
\]
where \(I_{E_*}\) denotes the identity operator on \(E_*\). 
The pointwise right inverse $A(z)^*\bigl(A(z)A(z)^*\bigr)^{-1}$ 
need not be holomorphic, whereas the matrix corona problem requires a
bounded analytic right inverse.  The matrix corona problem therefore asks
whether pointwise uniform surjectivity can be upgraded to the
existence of a bounded \emph{analytic} right inverse. The operator corona problem is closely connected with analytic
projections and the geometry of holomorphic vector bundles;
see \cite{TreilWick2009}.  

The distinction between the dimensions of \(E\) and \(E_*\) is
crucial in this problem. The unrestricted operator corona theorem
fails in general in the infinite-dimensional setting, whereas strong
positive results remain available when the target space is
finite-dimensional; see, for example, \cite{TreilWick2009}. In the form needed here, the theorem of
Treil and Wick states that, if $
\dim E_*<\infty$ 
and
\[
\delta^2 I_{E_*}
\leq
A(z)A(z)^*
\leq
I_{E_*},
\]
then the equation $
Av=g$ 
is solvable in the corresponding vector-valued \(H^p\)-space,
\(1\leq p\leq\infty\), with an estimate depending on \(\delta\) and
\(\dim E_*\), but not on \(\dim E\); see
\cite[Theorem~0.1]{MR2158178}. The independence from the source
dimension is precisely the feature that allows us to treat finite and
countably infinite quaternionic families by the same argument.

There is also a parallel Hilbert-space formulation of the corona
problem. If \(A\) is regarded as a multiplier from
\(H^2(\D,E)\) to \(H^2(\D,E_*)\), one may ask whether the
corresponding multiplication operator \(M_A\) satisfies a quantitative
lower bound $
M_A M_A^*\geq \eta^2 I$. 
The classical Toeplitz corona theorem relates such an operator
inequality to the existence of a bounded analytic multiplier right
inverse; see, for example,
\cite{MR383098,MR482355,MR3329539}. As described in \cite{MR3329539}, in the classical disk setting the
corona problem can be separated into an \(H^2\)-corona problem and an
operator-theoretic Toeplitz corona problem.  

We now turn to the quaternionic setting. Let
\[
\B:=\{q\in\Hq:|q|<1\},
\qquad
\Sph:=\{I\in\Hq:I^2=-1\}.
\]
Slice regular function theory provides a different extension of
one-variable complex analysis: instead of increasing the dimension
of the domain, one replaces the scalar field \(\C\) by the
noncommutative skew field \(\Hq\). As pointed out in \cite{MR5081054}, ideal and corona problems in the
hypercomplex setting are complicated by the fact that pointwise
multiplication does not preserve slice hyperholomorphicity.  Thus the appropriate
quaternionic B\'ezout equation is
\[
\sum_j f_j\star g_j=1,
\]
and the noncommutativity of \(\star\) prevents a direct componentwise
transfer of the classical complex theory. We use the standard
terminology and basic results from
\cite{GentiliStruppa2007,MR4501241,
ColomboGentiliSabadiniStruppa2009}.  For the operator-theoretic aspects of slice hyperholomorphic function
theory, we refer to \cite{MR3585855}. A systematic
treatment of quaternionic Hilbert spaces and the associated Hardy
spaces can be found in
\cite{MR4841413}.

The first step toward a quaternionic corona theory was an algebraic
one. Gentili, Sarfatti, and Struppa proved that finitely many slice
regular functions on a symmetric slice domain with no common zero
generate the whole ring of slice regular functions
\cite{MR3621101}. In other words, under the qualitative nonvanishing
assumption one can solve
\[
\sum_{j=1}^n f_j\star h_j=1
\]
with slice regular coefficients \(h_j\). This result shows that the
basic ideal-theoretic phenomenon survives in the noncommutative
setting, but it does not provide bounded solutions or quantitative
\(H^\infty\)-estimates.

The bounded problem requires substantially more information. The
finite-generator quaternionic \(H^\infty\)-corona theorem was
established in \cite{MR5081054}: a quantitative lower bound
\[
\delta^2
\leq
\sum_{j=1}^n|f_j(q)|^2
\leq1
\]
produces bounded slice regular solutions of
\[
\sum_{j=1}^n f_j\star g_j=1.
\]
The proof proceeds, after fixing a complex slice, through a coupled
system of complex B\'ezout equations and a determinant-type
condition involving the holomorphic splitting components. This
approach is effective for finitely many generators, but it leaves
several structural questions open. In particular, it is natural to
ask whether the dependence on the number of generators can be
removed, whether the determinant expression has a more intrinsic
operator-theoretic meaning, and whether the quaternionic corona
problem admits an \(H^2\) or Toeplitz formulation. The first and the
last of these questions were explicitly raised in
\cite[Section~4, items~\textup{(1)} and~\textup{(2)}]{MR5081054}.

The purpose of the present paper is to address these questions by means
of a fixed-slice matrix realization.  Fix \(I\in\Sph\), set
\[
\CI:=\R+\R I,
\qquad
\D_I:=\B\cap\CI,
\]
and choose \(J\in\Sph\) with \(J\perp I\).  If $
f_I=F+GJ$ 
is the splitting of a slice regular function \(f\), then, for a
\(\CI\)-valued holomorphic function \(U\), we write $
U^\#(z):=\overline{U(\overline z)}$ 
and define
\begin{equation}\label{eq:Phi-def-intro}
\RI(f)(z)
:=
\begin{pmatrix}
F(z)&G(z)\\
-G^\#(z)&F^\#(z)
\end{pmatrix}.
\end{equation}

The use of a $2\times 2$ complex matrix realization to encode the
quaternionic $\star$-product as ordinary matrix multiplication goes back, in the setting of quaternionic Wiener algebras, to Alpay,
Colombo, Kimsey, and Sabadini; see
\cite[Eq.~(2.6) and Lemma~2.4]{MR3554256}.
We subsequently employed this fixed-slice matrix viewpoint in our study
of slice regular composition operators on quaternionic Fock spaces
\cite{LinLuZu2026FockMatrix}.
The features of this realization that are crucial for the present work are the spherical operator-norm identity and its extension to finite and countable operator rows. These make it possible to transfer the pointwise quaternionic corona condition exactly to an operator-valued corona condition with fixed two-dimensional target space, leading to cardinality-independent $H^\infty$-estimates, $H^p$-solvability, and the Toeplitz and Leech factorization results below.

The present paper and the recent work of Colombo, Pozzi, Sabadini, and Wick \cite{ColomboPozziSabadiniWick2026MatrixCorona} were developed independently. Their work extends the matrix-Corona strategy to slice monogenic functions with values in general Clifford algebras and develops the associated Cauchy–Binet determinant structure. The overlap between the two papers is confined to the quaternionic matrix-realization/matrix-Corona mechanism. The directions of the two works are otherwise complementary: \cite{ColomboPozziSabadiniWick2026MatrixCorona} emphasizes the Clifford-algebra extension, whereas the present paper focuses on countably many quaternionic generators, cardinality-independent estimates, $H^p$-solvability, and the operator-theoretic Leech and Toeplitz corona formulations.

The usefulness of this realization comes from three compatible
features. First, it converts the \(\star\)-product of slice regular
functions into ordinary multiplication of analytic matrix-valued
functions:
\[
\RI(f\star g)=\RI(f)\RI(g).
\]
Second, its range is a fixed-point real form of an algebra of
\(2\times2\) analytic matrix-valued functions. Third, and most
importantly for the corona problem, the matrix norm records all
values of \(f\) on the quaternionic sphere through a point of the
fixed slice.

The fixed-slice realization converts the quaternionic pointwise
corona condition into an operator-valued corona condition on
\(\D_I\) with fixed two-dimensional target space. This allows us to
apply the operator-valued corona theorem of Treil and Wick
\cite[Theorem~0.1]{MR2158178}. The fixed-point symmetry of the
realization then recovers quaternionic solutions from the complex
operator solutions. Since the target dimension is always \(2\), the
same argument applies to both finite and countable families, with
estimates independent of the number of generators.

For \(0<\delta<1\), set
\begin{equation}\label{eq:explicit-constant}
  \delta_0:=\min\{\delta,\e^{-1/2}\},
  \qquad
  \kappa:=\sqrt{1+\e^2}+\sqrt{\e}+\sqrt{2}\,\e.
\end{equation}
Define
\begin{equation}\label{eq:C-def}
  \mathcal C_0(\delta)
  :=
  \frac{\kappa}{\delta_0^3}
  \log\frac{1}{\delta_0^4}
  +\frac{1}{\delta_0},
  \qquad
  \mathcal C(\delta)
  :=\sqrt2\,\mathcal C_0(\delta).
\end{equation}

Throughout the remainder of the paper, \(\Lambda\) denotes either
\[
\Lambda=\{1,\ldots,n\}
\qquad\text{or}\qquad
\Lambda=\mathbb N.
\]

Our first main result is a quaternionic \(H^\infty\)-corona theorem
with estimates independent of the cardinality of the family of
generators.

\begin{theorem}[Quaternionic \(H^\infty\)-corona theorem]
\label{thm:quat-corona}
Let \(0<\delta<1\),  and let \(f_j\in H^\infty(\B)\), \(j\in\Lambda\),
satisfy
\begin{equation}\label{eq:quat-corona-hyp}
  \delta^2
  \leq
  \sum_{j\in\Lambda}|f_j(q)|^2
  \leq1,
  \qquad q\in\B.
\end{equation}
Then there exist $
g_j\in H^\infty(\B), j\in\Lambda$, 
such that
\begin{equation}\label{eq:quat-corona-conclusion}
  \sum_{j\in\Lambda}f_j\star g_j=1
  \qquad\text{on }\B.
\end{equation}
If \(\Lambda=\mathbb N\), the series in
\eqref{eq:quat-corona-conclusion} converges absolutely and uniformly
on compact subsets of \(\B\).

Moreover,
\begin{equation}\label{eq:quat-corona-l2-bound}
  \sup_{q\in\B}
  \left(
    \sum_{j\in\Lambda}|g_j(q)|^2
  \right)^{1/2}
  \leq\mathcal C(\delta).
\end{equation}
Consequently,
\begin{equation}\label{eq:quat-corona-explicit-bound}
  \sup_{j\in\Lambda}\|g_j\|_{H^\infty(\B)}
  \leq\mathcal C(\delta)
  \lesssim
  \delta^{-3}\log\frac{\e}{\delta},
\end{equation}
where the implied constant is absolute and independent of the
cardinality of \(\Lambda\).
\end{theorem}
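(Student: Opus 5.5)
We pass to the fixed-slice matrix realization, apply the Treil--Wick operator corona theorem \cite[Theorem~0.1]{MR2158178} with the two-dimensional target \(\C_I^2\), and then symmetrize the complex solution so that it comes from quaternionic functions.

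\textbf{Step 1: realization.} Fix \(I\perp J\) and split \(f_j|_{\D_I}=F_j+G_jJ\). Set \(\mathcal A(z):=\bigl(\RI(f_j)(z)\bigr)_{j\in\Lambda}\), viewed as an operator \(\ell^2(\Lambda,\C_I^2)\to\C_I^2\). We will use two facts about the realization. The first is multiplicativity, \(\RI(f\star g)=\RI(f)\RI(g)\). The second is the spherical norm identity: for \(z=x+yI\), the two singular values of \(\RI(f)(z)\) are \(|f(x+yK)|\) for suitable \(K\in\Sph\). The row version we need is
\[
\mathcal A(z)\mathcal A(z)^*=\sum_j\RI(f_j)(z)\RI(f_j)(z)^*,
\]
whose eigenvalues are exactly \(\max_K\sum_j|f_j(x+yK)|^2\) and \(\min_K\sum_j|f_j(x+yK)|^2\). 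To prove this, write \(\RI(f)(z)\RI(f)(z)^*\) as \(\sum_j\) of the \(2\times2\) matrices. By the representation formula, the quadratic form \(v^*\mathcal A\mathcal A^*v\) on unit vectors \(v\) equals \(\sum_j|f_j(x+yK_v)|^2\) for a point \(K_v\in\Sph\) determined by \(v\). This is a direct computation using \(f(x+yK)=\alpha+K\beta\) with \(\alpha,\beta\) expressed through \(F,G,F^\#,G^\#\). Hypothesis \eqref{eq:quat-corona-hyp} then gives
\[
\delta^2I_2\le\mathcal A(z)\mathcal A(z)^*\le I_2\quad\text{on }\D_I.
\]
Each entry of \(\mathcal A\) lies in \(H^\infty(\D_I)\), since \(|F|,|G|\le\|f\|_\infty\).

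\textbf{Step 2: complex corona.} Apply \cite[Theorem~0.1]{MR2158178} with \(\dim E_*=2\) to the columns of \(I_2\). This yields a bounded analytic \(\mathcal B=(B_j)_{j\in\Lambda}\), \(B_j(z)\in M_2(\C_I)\), with \(\sum_j\RI(f_j)B_j=I_2\) and \(\sup_z\|\mathcal B(z)\|_{\C_I^2\to\ell^2(\C_I^2)}\le \mathcal C_0(\delta)\). The constant \(\mathcal C_0\) is Treil--Wick's explicit bound with \(\dim E_*=2\); the \(\delta_0\) truncation accommodates their normalization.

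\textbf{Step 3: symmetrization.} The range of \(\RI\) is the fixed-point set of the antilinear involution
\[
\theta(M)(z):=\Omega\,\overline{M(\bar z)}\,\Omega^{-1},\qquad \Omega=\begin{pmatrix}0&1\\-1&0\end{pmatrix}.
\]
Indeed, \(M=\begin{pmatrix}a&b\\c&d\end{pmatrix}\) is fixed iff \(d=a^\#\) and \(c=-b^\#\). Since \(\theta\) is multiplicative and \(\theta(\RI(f_j))=\RI(f_j)\), the family \(\theta(B_j)\) solves the same equation. Hence \(\tilde B_j:=\tfrac12(B_j+\theta(B_j))\) is again a solution, and it is \(\theta\)-fixed. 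The operator norm is preserved, because \(\theta\) is a unitary conjugation composed with \(z\mapsto\bar z\), so \(\|\tilde{\mathcal B}\|\le\mathcal C_0\). Each \(\tilde B_j\) therefore equals \(\RI(g_j)\) for a unique slice regular \(g_j\) on \(\B\), obtained by extending \(U+VJ\) through the extension formula. Injectivity of \(\RI\) together with multiplicativity gives \(\sum f_j\star g_j=1\). Locally uniform absolute convergence follows from Cauchy--Schwarz in \(\ell^2\) applied to the rows of \(\RI\), combined with the representation formula, which transfers estimates on \(\D_I\) to all of \(\B\).

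\textbf{Step 4: bounds.} Apply the spherical norm identity to the column \((\tilde B_j)\). This gives \(\sum_j|g_j(q)|^2\le\) the largest eigenvalue of \(\sum_j\tilde B_j^*\tilde B_j\), controlled by \(\|\tilde{\mathcal B}\|^2\). Passing from the column operator norm to a Hilbert--Schmidt-type bound on the two columns costs a factor \(\sqrt2\), giving \(\mathcal C(\delta)=\sqrt2\,\mathcal C_0(\delta)\) and hence \eqref{eq:quat-corona-l2-bound}. Then \eqref{eq:quat-corona-explicit-bound} is immediate.

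\textbf{Main obstacle.} The hard step is the exact row version of the spherical norm identity in Step 1, including its column analogue used in Step 4. It must hold for countable families with constants independent of \(\Lambda\), and it must identify the extreme eigenvalues precisely with the extremes over the sphere \(x+y\Sph\). I would first verify it for a single \(f\), showing \(\RI(f)\RI(f)^*\) has eigenvalues \(|f(x+yK)|^2\) at antipodal extremal \(K\). I would then sum, using that the quadratic form depends linearly on \(\sum_j\alpha_j\alpha_j^*\)-type data.
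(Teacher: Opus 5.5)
Your strategy is the paper's: realize the family as the row $\Phi=(\RI(f_j))_{j\in\Lambda}$ with two-dimensional target, transfer the hypothesis to $\delta^2I_2\le\Phi\Phi^*\le I_2$ via the spherical row identity, apply Treil--Wick, restore the $\Theta$-symmetry, and read off the $g_j$ from the blocks.

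The one step that differs is the symmetrization. The paper applies Treil--Wick only once, to $e_1$, obtaining $\omega$. It then sets $\Psi=\bigl(\omega\ \ \widetilde\omega\bigr)$ with $\widetilde\omega=-\mathcal J_\Lambda\omega^\#$. The relation $\Phi\mathcal J_\Lambda=J_2\Phi^\#$ gives $\Phi\widetilde\omega=e_2$ automatically, and $\Psi$ is already $\Theta_{\Lambda,2}$-fixed. Moreover $\|\Psi(z)\|_{\mathrm{HS}}^2=\|\omega(z)\|^2+\|\omega(\overline z)\|^2$. Your averaging $\tfrac12(\mathcal B+\theta(\mathcal B))$ is also valid; it is the device the paper uses in the Leech theorem, where the right-hand side is a general fixed $H$ rather than $I_2$. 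It simply costs a second Treil--Wick application.

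The norm bookkeeping in Steps 2 and 4 is wrong as written, although the constant $\sqrt2\,\mathcal C_0(\delta)$ you reach is correct.

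\textbf{Step 2.} Applying Treil--Wick separately to $e_1$ and $e_2$ bounds each column $b_k$ of $\mathcal B$ by $\mathcal C_0(\delta)$. It does not give $\sup_z\|\mathcal B(z)\|_{\op}\le\mathcal C_0(\delta)$; it only gives $\|\mathcal B(z)\|_{\op}\le\|\mathcal B(z)\|_{\mathrm{HS}}\le\sqrt2\,\mathcal C_0(\delta)$.

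\textbf{Step 4.} The spherical identity gives $\sum_j|g_j(q)|^2=\xi\bigl(\sum_j\widetilde B_j(z)\widetilde B_j(z)^*\bigr)\xi^*$. This involves $\widetilde B_j\widetilde B_j^*$, not $\widetilde B_j^*\widetilde B_j$, and it is not controlled by $\|\widetilde{\mathcal B}(z)\|_{\op}^2$. At a non-real $z$ the four values $U(z),V(z),U(\overline z),V(\overline z)$ are independent. So one can have $\widetilde B_1(z)$ and $\widetilde B_2(z)$ equal to the matrix units $E_{11}$ and $E_{12}$. Then $\|\widetilde{\mathcal B}(z)\|_{\op}=1$, while $|g_1(z)|^2+|g_2(z)|^2=2$.

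\textbf{The correct chain.} First, $\sum_j|g_j(q)|^2\le\operatorname{tr}\sum_j\widetilde B_j\widetilde B_j^*=\|\widetilde{\mathcal B}(z)\|_{\mathrm{HS}}^2$. Second, the columns of $\theta(\mathcal B)(z)$ are $\mathcal J_\Lambda\overline{b_2(\overline z)}$ and $-\mathcal J_\Lambda\overline{b_1(\overline z)}$. So averaging keeps each column bounded by $\mathcal C_0(\delta)$, and hence $\|\widetilde{\mathcal B}(z)\|_{\mathrm{HS}}^2\le2\,\mathcal C_0(\delta)^2$.

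\textbf{Convergence for $\Lambda=\mathbb N$.} Cauchy--Schwarz on tails gives a uniform bound but not uniform convergence. You also need one of the tail factors, $\sum_{j>N}\|\RI(f_j)(z)\|_{\mathrm{HS}}^2$ or $\sum_{j>N}\|\widetilde B_j(z)\|_{\mathrm{HS}}^2$, to vanish uniformly on compacta; the other is uniformly bounded. The paper obtains this from norm continuity of $\Phi^*$ and $\Psi$ on $K_r$ together with $Q_N\to I$ strongly. Only after that may you apply $\RI$ to the infinite sum and invoke injectivity to conclude $\sum_j f_j\star g_j=1$.
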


In particular, Theorem~\ref{thm:quat-corona} extends the
finite-generator result of \cite{MR5081054} to countably many
generators, with a bound independent of the cardinality of the
family.

Our second main result gives Hardy-space solvability for the full range
\(1\leq p<\infty\).  We denote by \(H^p(\B)\) the quaternionic Hardy
space and by $
H^p(\B;\ell^2(\Lambda))$ 
the corresponding \(\ell^2(\Lambda)\)-valued Hardy space; the precise
definitions are recalled in Section~\ref{sec:Hp-Toeplitz-corona}.  For \(1\leq p<\infty\), set $
\rho_p:=2^{\max\{1/2,\,1/p\}}$.

\begin{theorem}[Quaternionic \(H^p\)-corona theorem]
\label{thm:quaternionic-Hp-corona}
Let \(0<\delta<1\), let \(1\leq p<\infty\), and let
\(f_j\in H^\infty(\B)\), \(j\in\Lambda\), satisfy
\begin{equation}\label{eq:Hp-corona-hyp}
\delta^2
\leq
\sum_{j\in\Lambda}|f_j(q)|^2
\leq1,
\qquad q\in\B.
\end{equation}
Then, for every \(k\in H^p(\B)\), there exists $
\mathbf h=(h_j)_{j\in\Lambda}
\in H^p(\B;\ell^2(\Lambda))$ 
such that
\begin{equation}\label{eq:Hp-corona-equation}
\sum_{j\in\Lambda}f_j\star h_j=k
\end{equation}
and
\begin{equation}\label{eq:Hp-corona-estimate}
\|\mathbf h\|_{H^p(\B;\ell^2(\Lambda))}
\leq
\rho_p^2
\mathcal C_0(\delta)
\|k\|_{H^p(\B)}.
\end{equation}
If \(\Lambda=\mathbb N\), the series in
\eqref{eq:Hp-corona-equation} converges in \(H^p(\B)\).
\end{theorem}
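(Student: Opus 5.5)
Fix \(I\in\Sph\) and \(J\perp I\), and pass to the fixed-slice matrix realization \eqref{eq:Phi-def-intro}. The \(H^p\)-problem becomes an operator \(H^p\)-corona problem with two-dimensional target space. We solve it with the \(H^p\)-version of the Treil--Wick theorem \cite[Theorem~0.1]{MR2158178}, and then symmetrize to get back a quaternionic solution.

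\textbf{Matrix data.} Write \(f_j=F_j+G_jJ\) on \(\D_I\) and form the analytic operator row
\[
A(z):=\bigl(\RI(f_j)(z)\bigr)_{j\in\Lambda}:\ \ell^2(\Lambda)\otimes\C^2\to\C^2 .
\]
The spherical operator-norm identity for countable rows turns \eqref{eq:Hp-corona-hyp} into
\[
\delta^2 I_{\C^2}\le A(z)A(z)^*\le I_{\C^2},\qquad z\in\D_I .
\]
Let \(k_I=K+LJ\). We encode \(k\) by the single column
\[
\kappa(z):=\begin{pmatrix}K(z)\\-L^\#(z)\end{pmatrix},
\]
which is the first column of \(\RI(k)\). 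We then want a column \(v=(v_j)_j\) with \(v_j\in H^p(\D_I;\C^2)\) and \(Av=\kappa\).

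\textbf{Comparing norms.} We need a comparison between the quaternionic \(H^p(\B)\)-norm and the vector-valued \(H^p(\D_I)\)-norms of these columns. Using the representation formula, \(|k(x+yI')|\) on the sphere through \(z\) is controlled by \((|K(z)|^2+|L(\bar z)|^2)^{1/2}\) and conversely. So up to constants \(2^{\max\{1/2,1/p\}}=\rho_p\) we get
\[
\|\kappa\|_{H^p(\D_I;\C^2)}\le\rho_p\|k\|_{H^p(\B)} ,
\]
and analogously for the reverse direction from columns to \(\ell^2\)-valued slice functions. This is where the two factors of \(\rho_p\) come from. The factor \(2^{1/2}\) arises from \(\ell^2\) versus \(\ell^1\) norms on \(\C^2\), and \(2^{1/p}\) from averaging over the two points \(z,\bar z\) of the boundary circle.

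\textbf{Solving and symmetrizing.} The Treil--Wick theorem in its \(H^p\) form, with the explicit constant for a two-dimensional target, gives a column \(v\) with \(Av=\kappa\) and
\[
\|v\|_{H^p}\le\mathcal C_0(\delta)\|\kappa\|_{H^p} .
\]
We want the solution to be the first column of \(\RI(h_j)\) for some slice regular \(h_j\). Every \(\C^2\)-valued analytic column \((U,-V^\#)^{\mathsf T}\) is of this form, with \(h_j\) the slice extension of \(U+VJ\). So no averaging is needed: define \(h_j\) from the entries of \(v_j\). The multiplicativity \(\RI(f\star g)=\RI(f)\RI(g)\), read on first columns, gives
\[
\sum_j f_j\star h_j=k
\]
on \(\D_I\), and hence on \(\B\) by the identity principle. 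The norm comparison then yields \eqref{eq:Hp-corona-estimate}.

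\textbf{Convergence for \(\Lambda=\mathbb N\).} We show convergence of the series in \(H^p(\B)\) by applying Cauchy--Schwarz in \(\ell^2\) pointwise on the boundary. This gives
\[
\Bigl\|\sum_{j>N}f_j\star h_j\Bigr\|_{H^p}\lesssim\Bigl\|\bigl(\textstyle\sum_{j>N}|h_j|^2\bigr)^{1/2}\Bigr\|_{L^p},
\]
and the right side tends to \(0\) by dominated convergence.

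\textbf{Main obstacle.} The delicate step is the precise norm transfer between \(H^p(\B;\ell^2(\Lambda))\) and the vector-valued \(H^p(\D_I)\)-spaces, with exactly the constant \(\rho_p\), including the endpoint \(p=1\). I would handle it through the representation formula on boundary circles together with the definition of the quaternionic \(H^p\) norm recalled in Section~\ref{sec:Hp-Toeplitz-corona}. A second point to check is that the Treil--Wick \(H^p\)-constant is indeed \(\mathcal C_0(\delta)\) when the target dimension is \(2\).
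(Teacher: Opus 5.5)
Your proposal is correct and follows the paper's proof essentially step for step: you encode $k$ by the first column $\RI(k)e_1$, apply the Treil--Wick $H^p$ theorem with target $\C^2$, read off $h_j$ directly from the column $v_j$ with no symmetrization, pay one factor $\rho_p$ in each norm transfer, and handle $\Lambda=\mathbb N$ by a tail estimate. Two points need tightening. The pointwise comparisons must involve both $z$ and $\overline z$, e.g.\ $\sum_j|h_j(q)|^2\le\|v(z)\|^2+\|v(\overline z)\|^2$; $|k(q)|$ is \emph{not} pointwise dominated by the column at $z$ alone, as you literally wrote. Also, the tail bound is best done in the realization, $\bigl\|\sum_{j>N}\RI(f_j)(z)v_j(z)\bigr\|\le\bigl\|(v_j(z))_{j>N}\bigr\|$, because $\star$ is not a pointwise product.
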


We next turn to the operator-theoretic form of the corona problem.
Let \(H^2(\B)\) be the quaternionic Hardy space corresponding to
\(p=2\) in \eqref{eq:quaternionic-Hp-norm}, and set  $
E_\Lambda:=\ell^2(\Lambda;\CI^2)$. 
For \(f\in H^\infty(\B)\), define the left multiplier
\[
M_fh:=f\star h,
\qquad
h\in H^2(\B).
\]
For $
\mathbf f=(f_j)_{j\in\Lambda}$, 
let $
\mathcal M_{\mathbf f}:
H^2(\B;\ell^2(\Lambda))
\longrightarrow
H^2(\B)$ 
denote the associated row multiplier,
\begin{equation}\label{eq:row-multiplier-def}
\mathcal M_{\mathbf f}(h_j)_{j\in\Lambda}
:=
\sum_{j\in\Lambda}f_j\star h_j.
\end{equation}
For countable \(\Lambda\), the boundedness and \(H^2\)-norm convergence
of this series under the hypothesis below are established in
Section~\ref{sec:Hp-Toeplitz-corona}.

Our third main result is the following Toeplitz-corona theorem.

\begin{theorem}[Quaternionic Toeplitz corona theorem]
\label{thm:quaternionic-toeplitz-corona}
Let $
f_j\in H^\infty(\B),
j\in\Lambda$, 
satisfy
\[
\sum_{j\in\Lambda}|f_j(q)|^2
\leq1,
\qquad q\in\B,
\]
and let $\eta>0$. 
Then the following statements are equivalent.

\begin{enumerate}
\item[\textup{(i)}]
The Toeplitz lower bound
\begin{equation}\label{eq:full-toeplitz-lower}
\mathcal M_{\mathbf f}
\mathcal M_{\mathbf f}^*
\geq
\eta^2I_{H^2(\B)}
\end{equation}
holds.

\item[\textup{(ii)}]
There exist $
g_j\in H^\infty(\B),
j\in\Lambda$, 
such that
\begin{equation}\label{eq:full-toeplitz-bezout}
\sum_{j\in\Lambda}
f_j\star g_j
=
1,
\end{equation}
and
\[
\Psi_{\mathbf g}(z)
=
\begin{pmatrix}
\RI(g_1)(z)\\
\RI(g_2)(z)\\
\vdots
\end{pmatrix}
\in
H^\infty
\bigl(
\D_I,\mathcal B(\CI^2,E_\Lambda)
\bigr)
\]
satisfies
\begin{equation}\label{eq:full-toeplitz-Psi-bound}
\|\Psi_{\mathbf g}\|_\infty
\leq
\eta^{-1}.
\end{equation}
\end{enumerate}

If \(\Lambda=\mathbb N\), the series in
\eqref{eq:full-toeplitz-bezout}
converges absolutely and uniformly on compact subsets of \(\B\).

\end{theorem}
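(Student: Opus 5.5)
We transfer everything to the fixed slice \(\D_I\) via the realization \(\RI\) and then invoke the classical operator-valued Toeplitz corona theorem (the vector-valued form of Arveson's theorem, equivalently Leech's factorization) for multipliers \(H^2(\D_I,E_\Lambda)\to H^2(\D_I,\CI^2)\). Set
\[
\Phi_{\mathbf f}(z):=\bigl(\RI(f_1)(z),\ \RI(f_2)(z),\ \ldots\bigr)\in\mathcal B(E_\Lambda,\CI^2).
\]
The spherical norm identity for operator rows shows that the hypothesis \(\sum_j|f_j(q)|^2\le1\) gives \(\|\Phi_{\mathbf f}\|_\infty\le1\). The first step is an isometric identification \(U:H^2(\B)\to H^2(\D_I,\CI^2)\), \(h\mapsto\) the first column of \(\RI(h)\), i.e. \((H,-H^\#)^{\mathsf T}\) when \(h_I=H+KJ\). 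This works because the quaternionic \(H^2\)-norm is the \(\CI\)-splitting norm \(\|H\|^2+\|K\|^2\), and \(\RI(h)\) is determined by its first column. Under \(U\) (and its \(\ell^2(\Lambda)\)-version), \(\RI(f\star h)=\RI(f)\RI(h)\) gives
\[
U\mathcal M_{\mathbf f}=M_{\Phi_{\mathbf f}}U_\Lambda .
\]
The one point to check is that the \(\CI\)-linear structure is right: \(U\) is right-\(\CI\)-linear, and the quaternionic adjoint corresponds to the complex adjoint on the image. The image of \(U\) is not all of \(H^2(\D_I,\CI^2)\): the pair \((H,-H^\#)\) ranges over all \(\CI^2\)-valued \(H^2\) pairs as \(h\) varies, so we need the full \(2\times2\) picture. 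We therefore use the map \(\widetilde U:h\mapsto\RI(h)\) into \(2\times2\) matrix functions, whose columns exhaust \(H^2(\D_I,\CI^2)\) via the symmetry \((a,b)\mapsto(\overline{b^\#}\cdots)\).

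\textbf{(ii)\(\Rightarrow\)(i).} Apply \(\RI\) to \(\sum_j f_j\star g_j=1\) to get \(\Phi_{\mathbf f}\Psi_{\mathbf g}=I_2\). Then
\[
M_{\Phi_{\mathbf f}}M_{\Psi_{\mathbf g}}=I,
\qquad
\|M_{\Psi_{\mathbf g}}\|\le\eta^{-1}.
\]
Hence for every \(x\),
\[
\|x\|=\|M_{\Psi_{\mathbf g}}^*M_{\Phi_{\mathbf f}}^*x\|\le\eta^{-1}\|M_{\Phi_{\mathbf f}}^*x\|,
\]
that is, \(M_{\Phi_{\mathbf f}}M_{\Phi_{\mathbf f}}^*\ge\eta^2\). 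Restricting to the image of \(U\) via the intertwining, and using that \(U\) is isometric and commutes with adjoints, gives \eqref{eq:full-toeplitz-lower}.

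\textbf{(i)\(\Rightarrow\)(ii).} First lift the lower bound to \(M_{\Phi_{\mathbf f}}M_{\Phi_{\mathbf f}}^*\ge\eta^2\) on all of \(H^2(\D_I,\CI^2)\). An arbitrary \((a,b)\) decomposes as a first column of some \(\RI(h_1)\) plus a second column of some \(\RI(h_2)\), which is \(\mathcal J\)-image of a first column, where \(\mathcal J(a,b)=(-b^\#,a^\#)\) is antiunitary. The two pieces are orthogonal in the real inner product, and \(M_\Phi\) commutes with \(\mathcal J\) by the fixed-point structure of \(\RI\). So the bound passes from the real-linear subspace to the whole space. (In fact the image of \(U\) is a real form, and real forms suffice because \(\langle M_\Phi M_\Phi^* x,x\rangle\) is real-quadratic.)

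Next apply the operator Toeplitz corona theorem (the commutant lifting form, with target \(\CI^2\) finite-dimensional and source \(E_\Lambda\) separable). It yields \(\Psi\in H^\infty(\D_I,\mathcal B(\CI^2,E_\Lambda))\) with
\[
\Phi_{\mathbf f}\Psi=I_2,\qquad \|\Psi\|_\infty\le\eta^{-1}.
\]
Then symmetrize: with the conjugation \(\Theta\mapsto\sigma\,\Theta^\#\sigma^{-1}\) (entrywise \(\#\), \(\sigma=\bigl(\begin{smallmatrix}0&1\\-1&0\end{smallmatrix}\bigr)\) blockwise) fixing the range of \(\RI\), replace \(\Psi\) by the average of \(\Psi\) and its conjugate. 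This average is still a right inverse, since \(\Phi_{\mathbf f}\) is fixed and \(I_2\) is fixed, and the convex combination does not increase the norm. By the characterization of the range of \(\RI\) as the fixed-point real form, each \(2\times2\) block is \(\RI(g_j)\) for a unique slice regular \(g_j\). Finally, \(\|g_j\|_\infty\le\|\Psi\|_\infty\) by the spherical norm identity, and \(\RI\) is injective, so \(\sum_j f_j\star g_j=1\).

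Convergence for \(\Lambda=\mathbb N\) follows from the Cauchy--Schwarz bound \(\sum_j|f_j||g_j|\le\bigl(\sum_j|f_j|^2\bigr)^{1/2}\bigl(\sum_j|g_j|^2\bigr)^{1/2}\) on spheres, using the representation formula to pass from \(\D_I\) to \(\B\). The main obstacle is the lifting of (i) to the full complex space, i.e. verifying that \(U\) and \(\mathcal J\) intertwine adjoints correctly.
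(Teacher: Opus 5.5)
Your route is the paper's: conjugate $\mathcal M_{\mathbf f}$ to $M_\Phi$ by the first-column map, then apply the operator Toeplitz corona (Leech) theorem on $\D_I$ with target $\CI^2$. After that you average $\Psi$ with $\Theta_{\Lambda,2}(\Psi)$ and read off $g_j$ blockwise. The paper obtains the statement as the case $h\equiv1$, $\gamma=\eta^{-1}$ of its quaternionic Leech theorem, which is the same argument, and your direct proof of (ii)$\Rightarrow$(i) via $M_\Phi M_{\Psi_{\mathbf g}}=I$ is fine.

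The problem is the step you call the main obstacle, which comes from a miscomputation. For $h_I=H+KJ$ the first column of $\RI(h)$ is $(H,-K^\#)^{\mathsf T}$, not $(H,-H^\#)^{\mathsf T}$, and $H$, $K$ range independently over $H^2(\D_I,\CI)$. Hence $U$ is onto: given $(A,B)^{\mathsf T}\in H^2(\D_I,\CI^2)$, the slice regular $h$ with $h_I=A-B^\#J$ satisfies $Uh=(A,B)^{\mathsf T}$. Combined with the isometry and $\CI$-linearity you already noted, $U$ and $U_\Lambda$ are unitary. Therefore $\mathcal M_{\mathbf f}\mathcal M_{\mathbf f}^*=U^*M_\Phi M_\Phi^*U$, and the lower bound transfers in both directions with nothing to lift. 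This is the content of the paper's first-column lemma. Surjectivity of $U_\Lambda$ is needed as well: with only an isometry you would get $U^*M_\Phi PM_\Phi^*U$ for a projection $P$. The real form you have in mind is $\{\RI(h)\}\subset H^2(\D_I,M_2(\CI))$, and the first-column map is a bijection from it onto the whole column space.

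The lifting paragraph should therefore be deleted rather than repaired, and as written it would not work anyway. The principle that a lower bound of a Hermitian form on a real form extends to the complexification because the form is real-valued is false. For example, $A=\begin{pmatrix}1&i\\-i&1\end{pmatrix}$ satisfies $x^{\mathsf T}Ax=\|x\|^2$ for $x\in\R^2$, yet $\det A=0$. Likewise, real orthogonality of $v_1$ and $\mathcal Jv_2$ (which you assert without proof) does not by itself remove the cross term $2\,\mathrm{Re}\langle M_\Phi M_\Phi^*v_1,\mathcal Jv_2\rangle$.

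A smaller point concerns convergence for $\Lambda=\mathbb N$. The inequality $|(f\star g)(q)|\le|f(q)|\,|g(q)|$ fails in general. Bound the terms instead by $\|\RI(f_j)(z)\|_{\mathrm{HS}}\|\RI(g_j)(z)\|_{\mathrm{HS}}$ with $z=x+Iy$. Uniform convergence on compacta also needs the tails $\sum_{j>N}$ to be small uniformly on $\{|z|\le r\}$. The paper gets this from Hilbert--Schmidt compactness of the images of that disk under $\Phi^*$ and $\Psi$; pointwise Cauchy--Schwarz alone only gives absolute convergence.
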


Theorem~\ref{thm:quaternionic-toeplitz-corona} is obtained in
Section~\ref{sec:Hp-Toeplitz-corona} as the special case \(h\equiv1\) of a more general
quaternionic Leech factorization theorem.  More precisely, we obtain
a Toeplitz-operator characterization, with quantitative control, of
factorizations of the form
\[
h=\sum_{j\in\Lambda}f_j\star g_j,
\qquad
h\in H^\infty(\B).
\]
The proof combines a unitary fixed-slice realization of quaternionic
\(H^2\)-multipliers with the classical Leech factorization theorem and
the fixed-point symmetry of the matrix realization.

The paper is organized as follows.  Section~2 recalls the basic
properties of slice regular functions.  Section~3 develops the
fixed-slice matrix realization, proves its multiplicativity, and
establishes the spherical operator-norm identity together with its
extension to finite and countable operator rows.  In Section~4 we
prove a fixed-point operator corona theorem by combining the
operator-valued corona theorem of Treil and Wick with the symmetry of
the fixed-slice realization.  Section~5 proves
Theorem~\ref{thm:quat-corona} and identifies the determinant condition
in the finite-generator theorem of \cite{MR5081054} with the
Cauchy--Binet expansion of the corresponding Gram determinant.
Finally, Section~6 proves
Theorem~\ref{thm:quaternionic-Hp-corona}, develops the Hilbert-space
multiplier realization, establishes the quaternionic Leech
factorization theorem, and derives
Theorem~\ref{thm:quaternionic-toeplitz-corona}.

\section{Preliminaries}\label{sec:preliminaries}

The quaternion skew field is
\[
\Hq
=
\{x_0+x_1\mathrm i+x_2\mathrm j+x_3\mathrm k:
x_0,x_1,x_2,x_3\in\R\},
\]
with
\[
\mathrm i^2=\mathrm j^2=\mathrm k^2=-1,
\qquad
\mathrm i\mathrm j=-\mathrm j\mathrm i=\mathrm k,
\]
and the corresponding cyclic relations. For
\(q=x_0+x_1\mathrm i+x_2\mathrm j+x_3\mathrm k\), set
\[
\overline q=x_0-x_1\mathrm i-x_2\mathrm j-x_3\mathrm k,
\qquad
|q|=(q\overline q)^{1/2}.
\]

For \(I\in\Sph\), the plane \(\CI=\R+\R I\) is a copy of the complex
plane and \(\D_I=\B\cap\CI\) is its unit disk. For \(I,J\in\Sph\), we write \(J\perp I\) when \(I\) and \(J\)
are orthogonal; equivalently, \(IJ=-JI\).

We recall the standard definition; see
\cite[Definition~1.1]{MR4501241}.
\begin{definition}
A real-differentiable function \(f:\B\to\Hq\) is \emph{left slice
regular} if, for every \(I\in\Sph\),
\begin{equation}\label{eq:slice-CR}
\overline\partial_I f_I(x+Iy)
:=
\frac12\left(
\frac{\partial}{\partial x}
+I\frac{\partial}{\partial y}
\right)f_I(x+Iy)=0,
\end{equation}
where \(f_I=f|_{\D_I}\).
\end{definition}

A function is slice regular on \(\B\) if and only if it has a
power-series expansion
\begin{equation}\label{eq:slice-power-series}
f(q)=\sum_{m=0}^\infty q^m a_m,
\qquad a_m\in\Hq,
\end{equation}
converging uniformly on compact subsets of \(\B\); see
\cite[Theorems~1.6 and~1.10]{MR4501241}.

We write \(H^\infty(\D_I,\CI)\) for the bounded
\(\CI\)-valued holomorphic functions on \(\D_I\).
 \(I_E\) denotes the identity
operator on a Hilbert space \(E\). In particular, \(I_2\) denotes
the identity operator on \(\CI^2\).

\begin{lemma}[Splitting and extension lemma]
\label{lem:splitting-extension}
Let \(I,J\in\Sph\) with \(J\perp I\).
\begin{enumerate}
\item[\textup{(i)}]
If \(f\) is slice regular on \(\B\), then there are unique holomorphic
functions \(F,G:\D_I\to\CI\) such that
\begin{equation}\label{eq:splitting}
f_I(z)=F(z)+G(z)J.
\end{equation}
Moreover,
\begin{equation}\label{eq:splitting-norm}
|f_I(z)|^2=|F(z)|^2+|G(z)|^2.
\end{equation}
\item[\textup{(ii)}]
If \(h=F+GJ\), where \(F,G\in\mathcal O(\D_I,\CI)\), then
\begin{equation}\label{eq:extension-formula}
\operatorname{ext}_I(h)(x+Ly)
:=
\frac12(1-LI)h(x+Iy)
+\frac12(1+LI)h(x-Iy)
\end{equation}
defines the unique slice regular extension of \(h\) to \(\B\). If
\(h\) is bounded, then
\begin{equation}\label{eq:extension-bound}
\sup_{q\in\B}|\operatorname{ext}_I(h)(q)|
\leq2\sup_{z\in\D_I}|h(z)|.
\end{equation}
\end{enumerate}
These statements are the splitting and extension lemmas,
respectively; see \cite[Lemmas~1.3 and~1.21]{MR4501241}.
\end{lemma}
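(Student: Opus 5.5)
For part (i), fix \(I,J\in\Sph\) with \(J\perp I\) and use the real-vector-space decomposition \(\Hq=\CI\oplus\CI J\). Writing \(J=I J'\)-type bases is unnecessary; it suffices that \(\{1,I,J,IJ\}\) is a real basis of \(\Hq\). So every quaternion \(w\) has a unique representation \(w=a+bJ\) with \(a,b\in\CI\), and \(a,b\) depend \(\R\)-linearly on \(w\). Applying this pointwise to \(f_I(z)\) gives unique real-differentiable \(F,G:\D_I\to\CI\) with \(f_I=F+GJ\).

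Next apply \(\overline\partial_I\) from \eqref{eq:slice-CR}. Since \(I\) acts on the left and \(J\) on the right, \(\overline\partial_I(GJ)=(\overline\partial_I G)J\). Hence
\[
0=\overline\partial_I f_I=\overline\partial_I F+(\overline\partial_I G)J,
\]
with both \(\overline\partial_I F\) and \(\overline\partial_I G\) being \(\CI\)-valued. Uniqueness of the decomposition then forces \(\overline\partial_IF=\overline\partial_IG=0\), so \(F\) and \(G\) are holomorphic. For \eqref{eq:splitting-norm}, I would check that \(\CI\) and \(\CI J\) are orthogonal in \(\R^4\). Indeed, for \(a,b\in\CI\),
\[
\operatorname{Re}\bigl(a\,\overline{bJ}\bigr)=-\operatorname{Re}(aJ\bar b)=-\operatorname{Re}(J\,\bar a\bar b)=0,
\]
because \(aJ=J\bar a\) and \(J w\) is purely imaginary for \(w\in\CI\). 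Also \(|bJ|=|b|\), and the Pythagorean identity gives the claim.

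For part (ii), I first check that \eqref{eq:extension-formula} is well defined. A point \(x+Ly\) is also \(x+(-L)(-y)\), and substituting \((-L,-y)\) simply swaps the two terms, so the value is unchanged. For real points (\(y=0\)) the formula reduces to \(h(x)\). Taking \(L=I\) gives \((1-I^2)/2=1\) and \(1+I^2=0\), so the formula restricts to \(h\) on \(\D_I\).

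For slice regularity, fix \(L\) and apply \(\partial_x+L\partial_y\). The key identities are
\[
L(1-LI)=L+I=(1-LI)I,
\qquad
L(1+LI)=L-I=-(1+LI)I.
\]
They give
\[
(\partial_x+L\partial_y)\bigl[(1-LI)h(x+Iy)\bigr]=(1-LI)(\partial_x+I\partial_y)h=0.
\]
For the second term, the chain rule contributes a sign from the \(y\)-derivative of \(h(x-Iy)\), which combines with the second identity to again yield \((1+LI)\) times the Cauchy--Riemann operator applied to \(h\) at \(x-Iy\), and this vanishes. Here I use that \(\overline\partial_I(GJ)=(\overline\partial_IG)J\), so \(h=F+GJ\) is annihilated by \(\overline\partial_I\).

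Uniqueness follows from the identity principle: two slice regular functions agreeing on \(\D_I\) have power series \eqref{eq:slice-power-series} with the same coefficients \(a_m\), since these are determined by the restriction to one slice. Finally, \(|1\pm LI|\le 2\), so each term in \eqref{eq:extension-formula} is bounded by \(\sup|h|\). This gives \eqref{eq:extension-bound}.

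The only delicate point is the sign bookkeeping in the \(\overline\partial_L\) computation for the \(h(x-Iy)\) term, which I would verify carefully.
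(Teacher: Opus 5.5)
Your argument is correct. It is essentially the standard proof of the Splitting and Extension Lemmas in the cited monograph \cite{MR4501241}; the paper defers to that source and gives no proof of its own. For (i) you use the pointwise decomposition \(\Hq=\CI\oplus\CI J\) together with \(\overline\partial_I(GJ)=(\overline\partial_I G)J\). For (ii) you do a slice-by-slice Cauchy--Riemann check based on \(L(1\mp LI)=\pm(1\mp LI)I\). Your worry about the second term is unfounded: the sign from the chain rule does combine correctly, and you get \((1+LI)\bigl(\partial_x h+I\partial_y h\bigr)(x-Iy)=0\).

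The one point you leave unaddressed is that the paper's definition of slice regularity requires \(\operatorname{ext}_I(h)\) to be real differentiable on \(\B\) as a function of four real variables. Checking \(\overline\partial_L=0\) on each slice separately does not give this. Away from the real axis it is clear, since \(y=|\operatorname{Im}q|\) and \(L=\operatorname{Im}q/|\operatorname{Im}q|\) depend smoothly on \(q\). At real points, however, \(q\mapsto L\) is undefined, and some argument is needed.

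The quickest fix is as follows.
\begin{enumerate}
\item[(1)] Expand \(h(z)=\sum_m z^m a_m\) with \(a_m=\alpha_m+\beta_m J\), where \(\alpha_m,\beta_m\) are the Taylor coefficients of \(F,G\); then \(\limsup|a_m|^{1/m}\le 1\).
\item[(2)] Observe that \(\tfrac12(1-LI)(x+Iy)^m+\tfrac12(1+LI)(x-Iy)^m=(x+Ly)^m\). Indeed, if \((x+Iy)^m=u+Iv\), the left side equals \(u+Lv\).
\item[(3)] Right-multiply by \(a_m\) and sum to get \(\operatorname{ext}_I(h)(q)=\sum_m q^m a_m\).
\end{enumerate}
This series is real differentiable (indeed real analytic) on \(\B\). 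The termwise real differentials are bounded by \(m|q|^{m-1}|a_m|\), so the differentiated series converges locally uniformly. It is also visibly slice regular. This route therefore gives an alternative proof of (ii) that avoids the sign computation altogether.
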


If
\[
f(q)=\sum_{m=0}^\infty q^ma_m,
\qquad
g(q)=\sum_{m=0}^\infty q^mb_m,
\]
their slice regular product is
\begin{equation}\label{eq:star-product-definition}
(f\star g)(q)
:=
\sum_{m=0}^\infty q^m
\left(\sum_{k=0}^m a_kb_{m-k}\right).
\end{equation}
The product is unital and associative; see \cite{MR4501241}. The
Banach-algebra norm estimate follows from the isometric matrix
realization in Proposition~\ref{prop:fixed-slice-realization} below.

For \(1\leq p<\infty\), the quaternionic Hardy space \(H^p(\B)\)
consists of all slice regular functions \(h\) on \(\B\) such that
\begin{equation}\label{eq:quaternionic-Hp-norm}
\|h\|_{H^p(\B)}
:=
\sup_{L\in\Sph}\sup_{0<r<1}
\left(
\frac1{2\pi}
\int_0^{2\pi}
|h(re^{L\theta})|^p\,d\theta
\right)^{1/p}
<\infty.
\end{equation}
See, for example, \cite{MR4841413,MR3585855}.

The bounded slice regular functions form the space
\[
H^\infty(\B)
:=
\{f:\B\to\Hq:f\text{ is slice regular and }
\|f\|_{H^\infty(\B)}<\infty\},
\]
where \(\|f\|_{H^\infty(\B)}=\sup_{q\in\B}|f(q)|\).

For a family
\[
\mathbf h=(h_j)_{j\in\Lambda},
\]
we set
\begin{equation}\label{eq:Hp-sequence-norm}
\|\mathbf h\|_{H^p(\B;\ell^2(\Lambda))}
:=
\sup_{L\in\Sph}\sup_{0<r<1}
\left(
\frac1{2\pi}
\int_0^{2\pi}
\left(
\sum_{j\in\Lambda}
|h_j(re^{L\theta})|^2
\right)^{p/2}
d\theta
\right)^{1/p}.
\end{equation}

For a complex Hilbert space \(E\), we use the notation
\(H^p(\D_I,E)\) for the corresponding \(E\)-valued Hardy space.

For \(p=2\), the norm in
\eqref{eq:quaternionic-Hp-norm} agrees with the coefficient norm
\[
\|h\|_{H^2(\B)}^2
=
\sum_{m=0}^\infty |a_m|^2,
\qquad
h(q)=\sum_{m=0}^\infty q^m a_m.
\]
We use the right quaternionic inner product
\[
\langle h,u\rangle_{H^2(\B)}
=
\sum_{m=0}^\infty \overline{a_m}b_m,
\qquad
u(q)=\sum_{m=0}^\infty q^m b_m.
\]
For a sequence
\(\mathbf h=(h_j)_{j\in\Lambda}\),
\begin{equation}\label{eq:sequence-H2-norm}
\|\mathbf h\|_{H^2(\B;\ell^2(\Lambda))}^2
=
\sum_{j\in\Lambda}\|h_j\|_{H^2(\B)}^2.
\end{equation}
See also \cite{MR4841413,MR3585855}.

For a \(\CI\)-valued holomorphic function \(U\) on \(\D_I\), reflected
conjugation is \(U^\#(z)=\overline{U(\overline z)}\). For a matrix or
vector-valued holomorphic function, \(\#\) is applied entrywise. This
is a conjugate-linear isometric involution.

For Hilbert spaces \(E\) and \(E_*\), the space
\(H^\infty\bigl(\D_I,\mathcal B(E,E_*)\bigr)\)
consists of bounded operator-valued holomorphic functions, with norm
\[
\|A\|_\infty
:=
\sup_{z\in\D_I}\|A(z)\|_{\op}.
\]
When the domain is finite-dimensional, \(\|A\|_{\mathrm{HS}}\)
denotes the Hilbert--Schmidt norm.

\section{The fixed-slice matrix realization}
\label{sec:fixed-slice-realization}

Fix \(I,J\in\Sph\) with \(J\perp I\). Every quaternion
\(a\in\Hq\) has a unique representation
\[
a=\alpha+\beta J,
\qquad \alpha,\beta\in\CI.
\]
Set
\begin{equation}\label{eq:chi-def}
\chi_I(a)
:=
\begin{pmatrix}
\alpha&\beta\\
-\overline\beta&\overline\alpha
\end{pmatrix}.
\end{equation}
Then \(\chi_I:\Hq\to M_2(\CI)\) is an injective unital
homomorphism of real algebras; see \cite{Zhang1997}.

\subsection{The  fixed-point space}

Define
\[
H^\infty_\Theta(\D_I,M_2(\CI))
:=
\left\{
\begin{pmatrix}
F&G\\
-G^\#&F^\#
\end{pmatrix}:
F,G\in H^\infty(\D_I,\CI)
\right\}.
\]
If \(f_I=F+GJ\) is the splitting of a slice regular function, its
fixed-slice matrix realization is the matrix
\(\RI(f)\) defined in \eqref{eq:Phi-def-intro}. 
Equivalently, if
\[
f(q)=\sum_{m\geq0}q^ma_m,
\qquad a_m=\alpha_m+\beta_mJ,
\]
then
\begin{equation}\label{eq:Phi-coeff}
\RI(f)(z)=\sum_{m\geq0}z^m\chi_I(a_m).
\end{equation}

This realization is the slice-regular quaternionic counterpart of the
matrix map introduced for quaternionic Wiener algebras in
\cite[Eq.~(2.6)]{MR3554256}; its
multiplicative property corresponds to
\cite[Lemma~2.4]{MR3554256}.

\begin{proposition} 
\label{prop:fixed-slice-realization}
The map $
\RI:H^\infty(\B)
\longrightarrow
H^\infty_\Theta(\D_I,M_2(\CI))$ 
is a real-linear bijection and satisfies
\begin{equation}\label{eq:RI-multiplicative}
\RI(f\star g)=\RI(f)\RI(g),
\qquad f,g\in H^\infty(\B).
\end{equation}

Moreover, let \(f\) be slice regular on \(\B\), let
\(z=x+Iy\in\D_I\), and let
\[
\xi=(a,b)\in\CI^2,
\qquad |a|^2+|b|^2=1.
\]
Set
\[
u:=a+bJ,
\qquad L:=u^{-1}Iu.
\]
Then \(|u|=1\), \(L\in\Sph\), and
\begin{equation}\label{eq:spherical-row-formula}
\|\xi\RI(f)(z)\|_{\CI^2}
=
|f(x+Ly)|.
\end{equation}
Consequently,
\begin{equation}\label{eq:spherical-op-norm}
\|\RI(f)(x+Iy)\|_{\op}
=
\max_{L\in\Sph}|f(x+Ly)|.
\end{equation}
In particular,
\[
\|\RI(f)\|_\infty
=
\|f\|_{H^\infty(\B)},
\]
and hence \(\RI\) is an isometric isomorphism of unital real Banach
algebras.
\end{proposition}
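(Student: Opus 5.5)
The plan is to work directly with the coefficient formula \eqref{eq:Phi-coeff} and the homomorphism property of \(\chi_I\). For bijectivity, the splitting lemma (Lemma~\ref{lem:splitting-extension}(i)) attaches to each \(f\) a unique pair \(F,G\). Equivalently, writing \(a_m=\alpha_m+\beta_mJ\), we have \(F(z)=\sum z^m\alpha_m\) and \(G(z)=\sum z^m\beta_m\). Here we use \(z^m\beta_mJ\), which is legitimate because \(z\in\CI\) commutes with \(\beta_m\). Then \(F^\#(z)=\sum z^m\overline{\alpha_m}\), and this gives \eqref{eq:Phi-coeff}. Conversely, any pair \(F,G\in H^\infty(\D_I,\CI)\) extends to a slice regular function by Lemma~\ref{lem:splitting-extension}(ii). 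That extension is bounded by \eqref{eq:extension-bound}, so \(\RI\) maps onto \(H^\infty_\Theta\). Injectivity follows from uniqueness of the splitting, and real-linearity is clear. For multiplicativity, I would insert the Cauchy product \eqref{eq:star-product-definition} into \eqref{eq:Phi-coeff}. Since \(\chi_I\) is a real algebra homomorphism, \(\chi_I(\sum_k a_kb_{m-k})=\sum_k\chi_I(a_k)\chi_I(b_{m-k})\). Also, \(z^m\) is a complex scalar commuting with the complex matrices, so the matrix power series multiply termwise to give \(\RI(f)\RI(g)\).

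The spherical row formula is the heart of the argument. First I would check that \(|u|^2=|a|^2+|b|^2=1\), using \(|\alpha+\beta J|^2=|\alpha|^2+|\beta|^2\). Then \(L=\bar uIu\) has \(L^2=-1\), so \(L\in\Sph\). Next I compute \(\xi\RI(f)(z)=(aF-bG^\#,\ aG+bF^\#)\), where \(F=F(z)\) and \(G^\#=G^\#(z)=\overline{G(\bar z)}\). I identify this vector with the quaternion \(w=(aF-bG^\#)+(aG+bF^\#)J\), which satisfies \(|w|=\|\xi\RI(f)(z)\|\). The key observation is that \(w=u\,f(\bar z)\)-type expressions combine, up to reordering, into \(u\,f(x+Ly)\,u^{-1}\) or similar. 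I would use \(J\gamma=\bar\gamma J\) for \(\gamma\in\CI\) together with the representation formula \(f(x+Ly)=\tfrac12(1-LI)f(z)+\tfrac12(1+LI)f(\bar z)\). I plan to verify directly that
\[
u\,f(x+Ly)=\bigl(aF(z)-bG^\#(z)\bigr)+\bigl(aG(z)+bF^\#(z)\bigr)J,
\]
possibly after conjugating. Substituting \(L=\bar uIu\) gives \(u(1\mp LI)=u\mp Iu\,I\). Writing \(u=a+bJ\), one finds \(IuI=-a+bJ\), since \(IJI=J\). Hence \(\tfrac12(u-IuI)=a\) and \(\tfrac12(u+IuI)=bJ\). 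This yields \(u f(x+Ly)=a f(z)+bJ f(\bar z)\). Finally, \(bJ(F(\bar z)+G(\bar z)J)=b\overline{F(\bar z)}J-b\overline{G(\bar z)}\), which is exactly \(bF^\#J-bG^\#\). So \(|f(x+Ly)|=|u f(x+Ly)|=\|\xi\RI(f)(z)\|\). This algebra, with its signs and the order of the factors, is the step I expect to need the most care; in particular I must check whether the row-vector or the column convention matches \(u\) on the left.

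For \eqref{eq:spherical-op-norm}, I take the supremum over unit row vectors \(\xi\). This gives the operator norm of \(\RI(f)(z)\), since the norm of a matrix equals the norm of its action on row vectors (that is, the norm of its transpose). As \(\xi\) ranges over the unit sphere of \(\CI^2\), \(u\) ranges over all unit quaternions, and \(u^{-1}Iu\) therefore covers all of \(\Sph\). Compactness turns the supremum into a maximum. Taking the supremum over \(z\) then gives \(\|\RI(f)\|_\infty=\|f\|_{H^\infty(\B)}\), because every \(q\in\B\) has the form \(x+Ly\). Together with multiplicativity and \(\RI(1)=I_2\), this gives an isometric isomorphism of unital real Banach algebras. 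Completeness of \(H^\infty(\B)\) transfers from the closed subspace \(H^\infty_\Theta\).
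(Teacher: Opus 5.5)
Your proposal is correct. The bijection, the multiplicativity argument (coefficient formula \eqref{eq:Phi-coeff}, homomorphism property of $\chi_I$, Cauchy product), the passage from row vectors to \eqref{eq:spherical-op-norm}, and the isometry and completeness step all match the paper. The real difference is how you prove \eqref{eq:spherical-row-formula}.

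The paper observes that $\xi$ is the first row of $\chi_I(u)$. By multiplicativity of $\chi_I$, the entries of $\xi\RI(f)(z)$ are then the $\CI$-components of $\sum_m z^m u c_m$. The paper then conjugates inside the power series: $\sum_m z^m u c_m=u\sum_m (u^{-1}zu)^m c_m=u f(x+Ly)$.

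You instead combine the representation formula with $uLI=IuI$ and $IuI=-a+bJ$ to get $u f(x+Ly)=a f(z)+bJ f(\bar z)$, and then expand using $J\gamma=\bar\gamma J$. Your algebra closes exactly as written: $u f(x+Ly)=(aF-bG^\#)+(aG+bF^\#)J$. This holds with $u$ on the left, with no conjugation, and the row-vector convention is the right one. So the hedges ``or similar'' and ``possibly after conjugating'' can be deleted.

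The paper's computation is shorter and explains structurally why $\xi$ corresponds to $u$, but it relies on the global power series at the origin. Yours uses only the representation formula, so it would carry over unchanged to symmetric slice domains other than the ball. It also shows exactly where $L=u^{-1}Iu$ enters.

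Two small points need tidying.
\begin{enumerate}
\item[(1)] Injectivity of $\RI$ does not follow from uniqueness of the splitting. It comes from uniqueness of the slice regular extension of $f_I$, or directly from \eqref{eq:Phi-coeff} and injectivity of $\chi_I$.
\item[(2)] You never show that $f\star g\in H^\infty(\B)$, which is needed for $H^\infty(\B)$ to be an algebra. This takes one line. Your row identity and the coefficient-level multiplicativity hold for all slice regular functions, so applying \eqref{eq:spherical-op-norm} to $f\star g$ gives $\|f\star g\|_{H^\infty(\B)}=\|\RI(f)\RI(g)\|_\infty\leq\|f\|_{H^\infty(\B)}\|g\|_{H^\infty(\B)}$.
\end{enumerate}
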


\begin{proof}
By Lemma~\ref{lem:splitting-extension}\textup{(i)}, every
\(f\in H^\infty(\B)\) has a unique splitting
\(f_I=F+GJ\). By \eqref{eq:splitting-norm},
\(F,G\in H^\infty(\D_I,\CI)\) and the correspondence \(f\mapsto\RI(f)\) is real linear.
Conversely, if
\[
\Psi=
\begin{pmatrix}
F&G\\
-G^\#&F^\#
\end{pmatrix}
\in H^\infty_\Theta(\D_I,M_2(\CI)),
\]
then $
f=\operatorname{ext}_I(F+GJ)$ 
belongs to \(H^\infty(\B)\) by
Lemma~\ref{lem:splitting-extension}\textup{(ii)}, and
\(\RI(f)=\Psi\). Hence \(\RI\) is a real-linear bijection.

Let
\[
f(q)=\sum_{m\ge0}q^ma_m,
\qquad
g(q)=\sum_{m\ge0}q^mb_m.
\]
Using \eqref{eq:Phi-coeff}, the multiplicativity of \(\chi_I\), and
the Cauchy product, we obtain
\[
\begin{aligned}
\RI(f)(z)\RI(g)(z)
&=
\sum_{m\ge0}z^m
\sum_{k=0}^m\chi_I(a_k)\chi_I(b_{m-k})\\
&=
\sum_{m\ge0}z^m
\chi_I\left(\sum_{k=0}^ma_kb_{m-k}\right)
=
\RI(f\star g)(z).
\end{aligned}
\]
Since the matrix product on the left is bounded, its first-row
entries are bounded. The extension lemma therefore shows that
\(f\star g\in H^\infty(\B)\). This proves
\eqref{eq:RI-multiplicative}.

Now write
\[
f(q)=\sum_{m\ge0}q^mc_m,
\qquad
c_m=\alpha_m+\beta_mJ.
\]
For a unit row vector \(\xi=(a,b)\), put \(u=a+bJ\). Then
\(|u|=1\), and \(\xi\) is the first row of \(\chi_I(u)\).
Hence, by \eqref{eq:Phi-coeff} and the multiplicativity of
\(\chi_I\), the entries of \(\xi\RI(f)(z)\) are the two
\(\CI\)-components of
\[
\sum_{m\ge0}z^muc_m.
\]
Since
\[
\sum_{m\ge0}z^muc_m
=
u\sum_{m\ge0}(u^{-1}zu)^mc_m
=
u f(u^{-1}zu),
\]
and
\[
u^{-1}zu=x+(u^{-1}Iu)y=x+Ly,
\]
we obtain
\[
\|\xi\RI(f)(z)\|_{\CI^2}
=
|u f(x+Ly)|
=
|f(x+Ly)|.
\]
This proves \eqref{eq:spherical-row-formula}.

As \(u\mapsto u^{-1}Iu\) maps the unit quaternions onto \(\Sph\),
taking the supremum over all unit row vectors \(\xi\) gives
\eqref{eq:spherical-op-norm}. Therefore
\[
\begin{aligned}
\|\RI(f)\|_\infty
&=
\sup_{x^2+y^2<1}
\max_{L\in\Sph}|f(x+Ly)| =
\sup_{q\in\B}|f(q)|
=
\|f\|_{H^\infty(\B)}.
\end{aligned}
\]
Thus \(\RI\) is isometric. Since
\(H^\infty_\Theta(\D_I,M_2(\CI))\) is a closed subspace of
\(H^\infty(\D_I,M_2(\CI))\), it is complete; together with
\eqref{eq:RI-multiplicative} and \(\RI(1)=I_2\), this proves that
\(\RI\) is an isometric isomorphism of unital real Banach algebras.
\end{proof}

Let
\[
J_2:=
\begin{pmatrix}
0&1\\
-1&0
\end{pmatrix}.
\]
For \(A\in H^\infty(\D_I,M_2(\CI))\), define
\begin{equation}\label{eq:Theta-22}
\Theta_{2,2}(A):=J_2A^\#J_2^{-1}.
\end{equation}
A direct calculation gives
\begin{equation}\label{eq:fixed-point-range}
A\in H^\infty_\Theta(\D_I,M_2(\CI))
\quad\Longleftrightarrow\quad
\Theta_{2,2}(A)=A.
\end{equation}

Set
\begin{equation}\label{eq:E-Lambda-def}
E_\Lambda:=\ell^2(\Lambda;\CI^2),
\qquad
\mathcal J_\Lambda:=\bigoplus_{j\in\Lambda}J_2.
\end{equation}
For finite \(\Lambda=\{1,\ldots,n\}\), this means
\(E_\Lambda=\CI^{2n}\) and
\(\mathcal J_\Lambda=\diag(J_2,\ldots,J_2)\).

Let \(C_\Lambda\) and \(C_2\) denote coordinatewise conjugation on
\(E_\Lambda\) and \(\CI^2\), respectively. For
\[
A\in H^\infty
\bigl(\D_I,\mathcal B(E_\Lambda,\CI^2)\bigr),
\]
define
\[
A^\#(z):=C_2A(\overline z)C_\Lambda,
\qquad
\Theta_{2,\Lambda}(A)
:=J_2A^\#\mathcal J_\Lambda^{-1}.
\]
Similarly, for
\[
B\in H^\infty
\bigl(\D_I,\mathcal B(\CI^2,E_\Lambda)\bigr),
\]
define
\[
B^\#(z):=C_\Lambda B(\overline z)C_2,
\qquad
\Theta_{\Lambda,2}(B)
:=\mathcal J_\Lambda B^\#J_2^{-1}.
\]
These maps are conjugate-linear isometric involutions and satisfy
\begin{equation}\label{eq:Theta-product}
\Theta_{2,2}(AB)
=
\Theta_{2,\Lambda}(A)\Theta_{\Lambda,2}(B).
\end{equation} 

\subsection{Finite and countable operator rows}

\begin{lemma} 
\label{lem:operator-row}
Let \(f_j\in H^\infty(\B)\), \(j\in\Lambda\),
and suppose that
\[
\sum_{j\in\Lambda}|f_j(q)|^2\leq1,
\qquad q\in\B.
\]
Set $
\Phi_j:=\RI(f_j)$, $j\in\Lambda$. 
Then
\begin{equation}\label{eq:operator-row-def}
\Phi(z)(v_j)_{j\in\Lambda}
:=
\sum_{j\in\Lambda}\Phi_j(z)v_j
\end{equation}
defines a function $
\Phi\in H^\infty
\bigl(\D_I,\mathcal B(E_\Lambda,\CI^2)\bigr)$ 
with $
\|\Phi\|_\infty\leq1$. 
Moreover,
\begin{equation}\label{eq:row-fixed-point}
\Theta_{2,\Lambda}(\Phi)=\Phi.
\end{equation}
\end{lemma}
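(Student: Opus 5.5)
The plan has three steps: bound the row operator pointwise, establish holomorphy, and check the fixed-point symmetry. The pointwise bound comes from the row version of the spherical formula \eqref{eq:spherical-row-formula}. Fix \(z=x+Iy\in\D_I\) and a unit row vector \(\xi=(a,b)\in\CI^2\), and put \(u=a+bJ\), \(L=u^{-1}Iu\). By Proposition~\ref{prop:fixed-slice-realization}, \(\|\xi\Phi_j(z)\|_{\CI^2}=|f_j(x+Ly)|\) for every \(j\), with the same \(L\) for all \(j\), since \(L\) depends only on \(\xi\). Consider the adjoint \(\Phi(z)^*:\CI^2\to E_\Lambda\), given by \(\xi^*\mapsto(\Phi_j(z)^*\xi^*)_j\). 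We get
\[
\|\Phi(z)^*\xi^*\|_{E_\Lambda}^2=\sum_{j\in\Lambda}\|\xi\Phi_j(z)\|^2=\sum_{j\in\Lambda}|f_j(x+Ly)|^2\le1 .
\]
It follows that \(\Phi(z)^*\) is a well-defined bounded operator of norm at most \(1\). This gives \(\|\Phi(z)\|_{\op}\le1\), and the series in \eqref{eq:operator-row-def} converges in \(\CI^2\) for each \(v\in E_\Lambda\), by Cauchy--Schwarz on the rows. In the finite case nothing beyond this needs checking.

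Next comes holomorphy. For countable \(\Lambda\), each finite partial row \(\Phi^{(N)}(z)=(\Phi_1(z),\dots,\Phi_N(z),0,\dots)\) is holomorphic, and the argument above gives \(\|\Phi^{(N)}\|_\infty\le1\). For fixed \(v\in E_\Lambda\) and \(w\in\CI^2\), the scalar functions \(\langle\Phi^{(N)}(z)v,w\rangle\) converge pointwise and are uniformly bounded. By Vitali/Montel they converge locally uniformly to \(\langle\Phi(z)v,w\rangle\), which is therefore holomorphic. Weak holomorphy together with local boundedness gives holomorphy in operator norm. Hence \(\Phi\in H^\infty(\D_I,\mathcal B(E_\Lambda,\CI^2))\) with \(\|\Phi\|_\infty\le1\). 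The one delicate point is the justification of pointwise convergence of the tail: \(\|\sum_{j>N}\Phi_j(z)v_j\|\le\|v\chi_{j>N}\|\to0\), by the norm bound applied to the tail row.

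For the symmetry \eqref{eq:row-fixed-point}, evaluate on an element \(v=(v_j)\). Because \(\mathcal J_\Lambda\) and \(C_\Lambda\) act coordinatewise, we have
\[
\Theta_{2,\Lambda}(\Phi)(z)v=J_2\,C_2\,\Phi(\overline z)\,C_\Lambda\mathcal J_\Lambda^{-1}v=\sum_{j}J_2\,\overline{\Phi_j(\overline z)}\,J_2^{-1}v_j=\sum_j\Theta_{2,2}(\Phi_j)(z)v_j .
\]
The interchange of the conjugations with the convergent sum is valid by continuity. Since each \(\Phi_j=\RI(f_j)\) lies in \(H^\infty_\Theta\), \eqref{eq:fixed-point-range} gives \(\Theta_{2,2}(\Phi_j)=\Phi_j\). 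Therefore \(\Theta_{2,\Lambda}(\Phi)=\Phi\). The main obstacle is conceptual rather than computational: recognizing that a single choice of \(L\) serves all \(j\) simultaneously, which is what turns the scalar hypothesis into the operator bound.
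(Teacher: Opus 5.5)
Your proposal is correct and follows essentially the same route as the paper. You use the spherical row formula with a single $L$ for all $j$ to show that $\eta\mapsto(\Phi_j(z)^*\eta)_j$ is a contraction whose adjoint is $\Phi(z)$, then get holomorphy from the tail bound upgraded from weak to norm holomorphy, and finally check the fixed-point identity blockwise. The appeal to Vitali is unnecessary: your tail estimate $\|\sum_{j>N}\Phi_j(z)v_j\|\le\|v\chi_{j>N}\|$ is already uniform in $z$, and the paper uses it in exactly that way.
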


\begin{proof}
For every finite subset \(F\subset\Lambda\), let $
\Phi_F=(\Phi_j)_{j\in F}$. 
If \(z=x+Iy\) and \(\xi=(a,b)\in\CI^2\) is a unit row vector, set
\[
u:=a+bJ,
\qquad
L:=u^{-1}Iu.
\]
By \eqref{eq:spherical-row-formula},
\[
\begin{aligned}
\xi\Phi_F(z)\Phi_F(z)^*\xi^*
&=
\sum_{j\in F}\|\xi\Phi_j(z)\|_{\CI^2}^2  =
\sum_{j\in F}|f_j(x+Ly)|^2
\leq1.
\end{aligned}
\]
Hence $
\|\Phi_F(z)\|_{\op}\leq1$
 uniformly in \(F\) and \(z\).

It follows that, for every \(\eta\in\CI^2\),
\[
\sum_{j\in\Lambda}\|\Phi_j(z)^*\eta\|_{\CI^2}^2
\leq\|\eta\|_{\CI^2}^2.
\]
Thus $
\Phi(z)^*\eta
:=
\bigl(\Phi_j(z)^*\eta\bigr)_{j\in\Lambda}$ 
defines a contraction from \(\CI^2\) into \(E_\Lambda\). Its adjoint
is precisely the operator \(\Phi(z)\) in
\eqref{eq:operator-row-def}, and therefore $
\|\Phi(z)\|_{\op}\leq1$.

For each \(v=(v_j)_{j\in\Lambda}\in E_\Lambda\), the partial sums in
\eqref{eq:operator-row-def} converge uniformly on \(\D_I\). Indeed,
for every finite subset \(F\subset\Lambda\),
\[
\left\|
\sum_{j\in F}\Phi_j(z)v_j
\right\|_{\CI^2}
\leq
\left(
\sum_{j\in F}\|v_j\|_{\CI^2}^2
\right)^{1/2},
\qquad z\in\D_I.
\]
If \(\Lambda=\mathbb N\), then, for every finite subset
\(F\subset\{N+1,N+2,\ldots\}\),
\[
\sup_{z\in\D_I}
\left\|
\sum_{j\in F}\Phi_j(z)v_j
\right\|_{\CI^2}
\leq
\left(
\sum_{j>N}\|v_j\|_{\CI^2}^2
\right)^{1/2}
\longrightarrow0
\qquad (N\to\infty),
\]
because \(v\in E_\Lambda=\ell^2(\Lambda;\CI^2)\). Thus the series in
\eqref{eq:operator-row-def} is uniformly Cauchy on \(\D_I\), and hence
converges uniformly there. The finite case is immediate. Consequently,
\(z\mapsto\Phi(z)v\) is holomorphic.

Therefore the two coordinate rows of \(\Phi\) are weakly holomorphic
\(E_\Lambda^*\)-valued functions. Since they are uniformly bounded,
they are norm-holomorphic. Consequently,
\[
\Phi\in H^\infty
\bigl(\D_I,\mathcal B(E_\Lambda,\CI^2)\bigr),
\qquad
\|\Phi\|_\infty\leq1.
\]

Finally, each block \(\Phi_j=\RI(f_j)\) is fixed by
\(\Theta_{2,2}\). Hence the block row \(\Phi\) satisfies
\eqref{eq:row-fixed-point}.
\end{proof}

\begin{lemma} 
\label{lem:spherical-row}
Let \((f_j)_{j\in\Lambda}\) satisfy the hypotheses of
Lemma~\ref{lem:operator-row}, and let \(\Phi\) be the corresponding
operator row. If
\[
z=x+Iy\in\D_I,
\qquad
\xi=(a,b)\in\CI^2,
\qquad
\|\xi\|_{\CI^2}=1,
\]
and
\[
u:=a+bJ,
\qquad
L:=u^{-1}Iu,
\]
then
\begin{equation}\label{eq:Lambda-row-identity}
\xi\Phi(z)\Phi(z)^*\xi^*
=
\sum_{j\in\Lambda}|f_j(x+Ly)|^2.
\end{equation}
Consequently, for \(0<\delta<1\),
\[
\delta^2
\leq
\sum_{j\in\Lambda}|f_j(q)|^2
\leq1,
\qquad q\in\B,
\]
if and only if
\begin{equation}\label{eq:operator-corona-equivalence}
\delta^2I_2
\leq
\Phi(z)\Phi(z)^*
\leq I_2,
\qquad z\in\D_I.
\end{equation}
\end{lemma}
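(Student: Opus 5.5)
The identity \eqref{eq:Lambda-row-identity} will be reduced to the single-function formula \eqref{eq:spherical-row-formula} of Proposition~\ref{prop:fixed-slice-realization}. By Lemma~\ref{lem:operator-row}, $\Phi(z)^*\eta=(\Phi_j(z)^*\eta)_{j\in\Lambda}$ for every $\eta\in\CI^2$, where the adjoint is the coordinatewise one into $E_\Lambda=\ell^2(\Lambda;\CI^2)$. We apply this with $\eta=\xi^*$, which gives
\[
\xi\Phi(z)\Phi(z)^*\xi^*
=\|\Phi(z)^*\xi^*\|_{E_\Lambda}^2
=\sum_{j\in\Lambda}\|\Phi_j(z)^*\xi^*\|_{\CI^2}^2
=\sum_{j\in\Lambda}\|\xi\Phi_j(z)\|_{\CI^2}^2 .
\]
Here $\|\xi\Phi_j(z)\|$ means the norm of the row vector, which equals the norm of its conjugate transpose. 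By \eqref{eq:spherical-row-formula}, each summand equals $|f_j(x+Ly)|^2$ with the same $L=u^{-1}Iu$, since $u$ depends only on $\xi$ and not on $j$. This proves \eqref{eq:Lambda-row-identity}. For countable $\Lambda$ the series converges because both sides are norms in $\ell^2$; equivalently, one can pass to the limit over finite subsets $F$ exactly as in the proof of Lemma~\ref{lem:operator-row}.

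For the equivalence, recall that $\Phi(z)\Phi(z)^*$ is a positive $2\times2$ matrix. Hence $\delta^2I_2\le\Phi(z)\Phi(z)^*\le I_2$ holds if and only if $\delta^2\le\xi\Phi(z)\Phi(z)^*\xi^*\le1$ for every unit row vector $\xi$. By the identity just proved, this quadratic form runs over the values $\sum_j|f_j(x+Ly)|^2$. The key surjectivity fact, already used in Proposition~\ref{prop:fixed-slice-realization}, is that as $\xi$ ranges over the unit sphere of $\CI^2$, $u=a+bJ$ ranges over all unit quaternions and $L=u^{-1}Iu$ ranges over all of $\Sph$. Every $q\in\B$ can be written as $x+Ly$ with $x^2+y^2<1$, $L\in\Sph$, and then $z=x+Iy\in\D_I$. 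If $y<0$, we replace $L$ by $-L$ and $y$ by $-y$; alternatively we may simply allow $z$ anywhere in $\D_I$. The two-sided bound over all $q\in\B$ is therefore exactly the two-sided bound on the quadratic form over all $z\in\D_I$ and all unit $\xi$, which gives both directions of the equivalence.

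One caveat: Lemma~\ref{lem:operator-row} requires the upper bound $\sum_j|f_j|^2\le1$ in order for $\Phi$ to be defined. The reverse implication must therefore be read with $\Phi$ given, and its upper bound reproduces that hypothesis. The only step needing care is the bookkeeping of adjoints and conjugations in the first display, namely checking that $\|\Phi_j(z)^*\xi^*\|=\|\xi\Phi_j(z)\|$. Beyond that, the argument is a direct consequence of the earlier results.
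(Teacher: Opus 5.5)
Your proof is correct and takes essentially the same route as the paper. You reduce \eqref{eq:Lambda-row-identity} to the single-function formula \eqref{eq:spherical-row-formula}, using the coordinatewise adjoint $\Phi(z)^*\xi^*=(\Phi_j(z)^*\xi^*)_{j\in\Lambda}$ where the paper sums over finite subsets and passes to the limit. You then obtain the equivalence from the quadratic-form characterization of positive $2\times2$ matrices together with the surjectivity of $u\mapsto u^{-1}Iu$ onto $\Sph$.
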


\begin{proof}
Summing \eqref{eq:spherical-row-formula} over finite subsets
\(F\subset\Lambda\) and then passing to \(F\uparrow\Lambda\) yields
\eqref{eq:Lambda-row-identity}. 
Suppose first that the pointwise condition in the statement of the
lemma holds.  
For every unit row vector \(\xi\), identity
\eqref{eq:Lambda-row-identity} gives $
\delta^2
\leq
\xi\Phi(z)\Phi(z)^*\xi^*
\leq1$. 
Since \(\Phi(z)\Phi(z)^*\) is a positive \(2\times2\) matrix, the
quadratic-form characterization of positive-operator inequalities
gives \eqref{eq:operator-corona-equivalence}.

Conversely, suppose that \eqref{eq:operator-corona-equivalence}
holds. Given \(q=x+Ly\in\B\), choose a unit quaternion \(u\) such that $
L=u^{-1}Iu$, 
and write \(u=a+bJ\) with \(a,b\in\CI\). Then
\(\xi=(a,b)\) is a unit row vector, and
\eqref{eq:Lambda-row-identity} gives
\[
\sum_{j\in\Lambda}|f_j(q)|^2
=
\xi\Phi(z)\Phi(z)^*\xi^*.
\]
Applying \eqref{eq:operator-corona-equivalence} to \(\xi\) yields
\[
\delta^2
\leq
\sum_{j\in\Lambda}|f_j(q)|^2
\leq1.
\]
This proves the equivalence.
\end{proof}

\section{A fixed-point operator corona theorem}
\label{sec:fixed-point-operator-corona}

\begin{theorem} 
\label{thm:real-matrix-corona}
Let \(0<\delta<1\), and suppose that $
\Phi\in H^\infty
\bigl(\D_I,\mathcal B(E_\Lambda,\CI^2)\bigr)$ 
satisfies $
\Theta_{2,\Lambda}(\Phi)=\Phi$
 and
\begin{equation}\label{eq:real-matrix-corona-hyp}
\delta^2I_2
\leq
\Phi(z)\Phi(z)^*
\leq I_2,
\qquad z\in\D_I.
\end{equation}
Then there exists $
\Psi\in H^\infty
\bigl(\D_I,\mathcal B(\CI^2,E_\Lambda)\bigr)$ 
such that
\begin{equation}\label{eq:real-matrix-bezout}
\Phi(z)\Psi(z)=I_2,
\qquad z\in\D_I,
\end{equation}
and
\begin{equation}\label{eq:Psi-fixed-point}
\Theta_{\Lambda,2}(\Psi)=\Psi.
\end{equation}
Moreover,
\begin{equation}\label{eq:real-matrix-explicit-bound}
\sup_{z\in\D_I}\|\Psi(z)\|_{\mathrm{HS}}
\leq\mathcal C(\delta).
\end{equation}
\end{theorem}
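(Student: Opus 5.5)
We combine the Treil--Wick theorem with an averaging step over the fixed-point involution. Since \(\dim\CI^2=2\), the hypothesis \eqref{eq:real-matrix-corona-hyp} is exactly the setting of \cite[Theorem~0.1]{MR2158178}. Applying it with \(E=E_\Lambda\) and \(E_*=\CI^2\), and solving \(\Phi v=e_k\) for the two constant vectors \(e_1,e_2\), gives \(\Psi_0\in H^\infty(\D_I,\mathcal B(\CI^2,E_\Lambda))\) with \(\Phi\Psi_0=I_2\). The bound depends only on \(\delta\) and \(\dim E_*=2\), never on \(\dim E_\Lambda\).

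The explicit form of the constant needs care. The Treil--Wick estimate is proved by a \(\bar\partial\)/Carleson-measure argument. I would track its constants for a two-dimensional target, first reducing to \(\delta\le e^{-1/2}\) by replacing \(\delta\) with \(\delta_0\). The goal is to obtain a solution of each column equation with \(\sup_z\|\Psi_0(z)e_k\|\le\mathcal C_0(\delta)\). Here the term \(1/\delta_0\) is the norm of the pointwise solution \(\Phi^*(\Phi\Phi^*)^{-1}\), and the term \(\kappa\delta_0^{-3}\log\delta_0^{-4}\) is the norm of the \(\bar\partial\)-correction. Summing over the two columns gives
\[
\|\Psi_0(z)\|_{\mathrm{HS}}^2=\|\Psi_0e_1\|^2+\|\Psi_0e_2\|^2\le 2\,\mathcal C_0(\delta)^2,
\]
so \(\|\Psi_0\|_{\mathrm{HS}}\le\sqrt2\,\mathcal C_0(\delta)=\mathcal C(\delta)\). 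This bookkeeping is the main obstacle. Matching \(\kappa\) exactly requires following the Treil--Wick proof, or a Hörmander--Wolff variant of it, line by line. If that cannot be done cleanly, I would still obtain the structure of the argument with an unspecified constant \(C(\delta)\).

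Next we symmetrize. Since \(\Theta_{2,\Lambda}(\Phi)=\Phi\), the product rule \eqref{eq:Theta-product} gives
\[
\Phi\,\Theta_{\Lambda,2}(\Psi_0)=\Theta_{2,\Lambda}(\Phi)\,\Theta_{\Lambda,2}(\Psi_0)=\Theta_{2,2}(\Phi\Psi_0)=\Theta_{2,2}(I_2)=J_2J_2^{-1}=I_2.
\]
We set
\[
\Psi:=\tfrac12\bigl(\Psi_0+\Theta_{\Lambda,2}(\Psi_0)\bigr).
\]
This is a real-affine average of two solutions, so \(\Phi\Psi=I_2\). Since \(\Theta_{\Lambda,2}\) is an involution, \(\Theta_{\Lambda,2}(\Psi)=\Psi\).

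We must also check that \(\Theta_{\Lambda,2}(\Psi_0)\) is again bounded and holomorphic. The map \(z\mapsto C_\Lambda\Psi_0(\overline z)C_2\) is holomorphic, because conjugating both the argument and the values preserves holomorphy. Conjugation by the unitaries \(\mathcal J_\Lambda\) and \(J_2\) preserves this.

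It remains to show that averaging does not increase the Hilbert--Schmidt norm. The matrices \(\mathcal J_\Lambda\) and \(J_2\) are unitary, and \(C_\Lambda\) and \(C_2\) are isometric conjugations. Hence
\[
\|\Theta_{\Lambda,2}(\Psi_0)(z)\|_{\mathrm{HS}}=\|\Psi_0(\overline z)\|_{\mathrm{HS}},
\]
and the triangle inequality gives \(\sup_z\|\Psi(z)\|_{\mathrm{HS}}\le\mathcal C(\delta)\), which is \eqref{eq:real-matrix-explicit-bound}.
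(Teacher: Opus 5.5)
Your proof is correct. The ingredients match the paper's: Treil--Wick with target dimension $2$, then the involution $\Theta_{\Lambda,2}$ to restore the quaternionic symmetry. What differs is how the fixed point is produced.

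The paper applies \cite[Theorem~0.1]{MR2158178} only once, to $g\equiv e_1$, obtaining $\omega$ with $\Phi\omega=e_1$. It then \emph{defines} the second column as $\widetilde\omega:=-\mathcal J_\Lambda\omega^\#$. The identity $\Phi\mathcal J_\Lambda=J_2\Phi^\#$ gives $\Phi\widetilde\omega=-J_2e_1=e_2$. Thus $\Psi=(\omega\ \widetilde\omega)$ is fixed by $\Theta_{\Lambda,2}$ by construction, and $\|\Psi(z)\|_{\mathrm{HS}}^2=\|\omega(z)\|^2+\|\omega(\overline z)\|^2$ holds exactly.

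You instead solve both column equations independently and then average $\Psi_0$ with $\Theta_{\Lambda,2}(\Psi_0)$ using \eqref{eq:Theta-product}. If your second column happened to be $-\mathcal J_\Lambda\omega_1^\#$, then $\Psi_0$ would already be fixed and your average would coincide with the paper's $\Psi$.

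What each route buys:
\begin{itemize}
\item The paper's route is more economical: one corona solution, no averaging, no triangle inequality. It relies on the special fact that the symmetry carries the equation with right-hand side $e_1$ to the one with $e_2$.
\item Your averaging needs only that the right-hand side is $\Theta_{2,2}$-fixed. So it works for any such right-hand side and any non-symmetric solution. This is exactly the device the paper uses later in \eqref{eq:Leech-symmetrization}, where the factor comes from Leech's theorem and cannot be chosen symmetric.
\end{itemize}

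The explicit constant is not the obstacle you anticipate. The paper does not track constants through any $\bar\partial$ argument. It reads $\mathcal C_0(\delta)$ directly off \cite[Theorem~0.1]{MR2158178} with $r=\dim E_*=2$ and $p=\infty$, after replacing $\delta$ by $\delta_0$ so that $\delta_0^2\le\e^{-1}$. Your per-column bound $\mathcal C_0(\delta)$ is therefore just a citation, and the fallback to an unspecified $C(\delta)$ is unnecessary.
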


\begin{proof}
We identify the complex Hilbert space \(\CI\) isometrically with
\(\C\). Since
\[
\delta_0^2I_2
\leq
\Phi(z)\Phi(z)^*
\leq I_2
\]
and \(\delta_0^2\leq\e^{-1}\), apply
\cite[Theorem~0.1]{MR2158178} with
\[
E=E_\Lambda,
\qquad E_*=\C^2,
\qquad r=\dim E_*=2,
\qquad p=\infty.
\]
Applying the theorem to the constant function
\(g\equiv e_1=(1,0)^{\mathsf T}\) yields
\[
\omega\in H^\infty(\D_I,E_\Lambda),
\qquad
\Phi\omega=e_1,
\]
and
\begin{equation}\label{eq:omega-bound}
\|\omega\|_\infty\leq\mathcal C_0(\delta).
\end{equation}

The fixed-point identity for \(\Phi\) is equivalent to
\begin{equation}\label{eq:Phi-J-identity}
\Phi\mathcal J_\Lambda=J_2\Phi^\#.
\end{equation}
For an \(E_\Lambda\)-valued function, put
\(\omega^\#(z)=C_\Lambda\omega(\overline z)\), and define
\[
\widetilde\omega:=-\mathcal J_\Lambda\omega^\#,
\qquad
\Psi:=\bigl(\omega\ \widetilde\omega\bigr).
\]
Using \eqref{eq:Phi-J-identity}, we obtain
\[
\begin{aligned}
\Phi\widetilde\omega
&=-\Phi\mathcal J_\Lambda\omega^\# =-J_2\Phi^\#\omega^\# =-J_2(\Phi\omega)^\# =-J_2e_1=e_2.
\end{aligned}
\]
Hence \(\Phi\Psi=I_2\). Since
\(\widetilde\omega^\#=-\mathcal J_\Lambda\omega\), a direct
calculation, using \(\mathcal J_\Lambda^2=-I\) and \(J_2^2=-I_2\),
gives
\[
\mathcal J_\Lambda\Psi^\#J_2^{-1}=\Psi,
\]
which is \eqref{eq:Psi-fixed-point}.

Finally,
\[
\begin{aligned}
\|\Psi(z)\|_{\mathrm{HS}}^2
&=\|\omega(z)\|_{E_\Lambda}^2
+\|\widetilde\omega(z)\|_{E_\Lambda}^2\\
&=\|\omega(z)\|_{E_\Lambda}^2
+\|\omega(\overline z)\|_{E_\Lambda}^2\\
&\leq2\mathcal C_0(\delta)^2
=\mathcal C(\delta)^2.
\end{aligned}
\]
This proves \eqref{eq:real-matrix-explicit-bound}.
\end{proof}

\section{Quaternionic \(H^\infty\)-corona theorem}
\label{sec:quaternionic-Hinfty-corona}

\begin{proof}[Proof of Theorem~\ref{thm:quat-corona}]
Fix \(I,J\in\Sph\) with \(J\perp I\). For \(j\in\Lambda\), let $
\Phi_j:=\RI(f_j)$, 
and let \(\Phi\) be the operator row defined in
\eqref{eq:operator-row-def}. By Lemmas~\ref{lem:operator-row} and~\ref{lem:spherical-row},
the operator row \(\Phi\) belongs to
\(H^\infty(\D_I,\mathcal B(E_\Lambda,\CI^2))\), satisfies
\(\Theta_{2,\Lambda}(\Phi)=\Phi\), and
\eqref{eq:operator-corona-equivalence} holds.

Theorem~\ref{thm:real-matrix-corona} gives $
\Psi\in H^\infty
\bigl(\D_I,\mathcal B(\CI^2,E_\Lambda)\bigr)$ 
such that
\begin{equation}\label{eq:Phi-Psi-identity}
\Phi\Psi=I_2,
\qquad
\Theta_{\Lambda,2}(\Psi)=\Psi,
\end{equation}
and
\begin{equation}\label{eq:Psi-HS-global-bound}
\sup_{z\in\D_I}\|\Psi(z)\|_{\mathrm{HS}}
\leq\mathcal C(\delta).
\end{equation}

For \(j\in\Lambda\), let $
P_j:E_\Lambda\longrightarrow\CI^2$
 be the \(j\)-th coordinate projection and set \(\Psi_j:=P_j\Psi\).
Since
\[
P_j\mathcal J_\Lambda=J_2P_j,
\qquad
P_jC_\Lambda=C_2P_j,
\]
the fixed-point identity for \(\Psi\) implies $
\Theta_{2,2}(\Psi_j)=\Psi_j$. 
By Proposition~\ref{prop:fixed-slice-realization} and
\eqref{eq:fixed-point-range}, there is a unique
\(g_j\in H^\infty(\B)\) such that
\begin{equation}\label{eq:gj-realization}
\RI(g_j)=\Psi_j.
\end{equation}

Given
\[
q=x+Ly\in\B,
\qquad
z=x+Iy\in\D_I,
\]
choose the corresponding unit row vector
\(\xi\in\CI^2\) as in
\eqref{eq:spherical-row-formula}. Then $
|g_j(q)|=\|\xi\Psi_j(z)\|$. 
Consequently,
\[
\begin{aligned}
\sum_{j\in\Lambda}|g_j(q)|^2
&=\sum_{j\in\Lambda}\|\xi\Psi_j(z)\|^2 \leq\sum_{j\in\Lambda}\|\Psi_j(z)\|_{\mathrm{HS}}^2 =\|\Psi(z)\|_{\mathrm{HS}}^2
\leq\mathcal C(\delta)^2.
\end{aligned}
\]
This proves \eqref{eq:quat-corona-l2-bound}.

If \(\Lambda=\{1,\ldots,n\}\), then
\[
\sum_{j=1}^n\Phi_j\Psi_j
=\Phi\Psi=I_2.
\]
By multiplicativity and injectivity of \(\RI\),
\[
\sum_{j=1}^n f_j\star g_j=1.
\]

Suppose now that \(\Lambda=\mathbb N\), and let \(Q_N\) be the
orthogonal projection of \(E_\Lambda\) onto its first \(N\) coordinate
blocks. Fix \(0<r<1\) and put
\[
K_r:=\{z\in\D_I:|z|\leq r\}.
\]
The maps \(z\mapsto\Phi(z)^*\) and \(z\mapsto\Psi(z)\) are continuous
in operator norm. Their domains are two-dimensional, and hence
\[
\|T\|_{\op}\leq\|T\|_{\mathrm{HS}}
\leq\sqrt2\,\|T\|_{\op}.
\]
Thus their images of \(K_r\) are compact in the Hilbert--Schmidt norm.
Since \(Q_N\to I_{E_\Lambda}\) strongly,
\(\sup_N\|Q_N\|\leq1\), and uniformly bounded operators converging
strongly to zero converge uniformly on compact subsets, we obtain
\begin{align}
\sup_{z\in K_r}
\|(I_{E_\Lambda}-Q_N)\Phi(z)^*\|_{\mathrm{HS}}
&\longrightarrow0,
\label{eq:countable-Phi-tail}\\
\sup_{z\in K_r}
\|(I_{E_\Lambda}-Q_N)\Psi(z)\|_{\mathrm{HS}}
&\longrightarrow0.
\label{eq:countable-Psi-tail}
\end{align}
For every \(z\in\D_I\), the Cauchy--Schwarz inequality gives
\[
\begin{aligned}
\sum_{j>N}\|\Phi_j(z)\Psi_j(z)\|_{\mathrm{HS}}
&\leq
\left(\sum_{j>N}\|\Phi_j(z)\|_{\mathrm{HS}}^2\right)^{1/2}
\left(\sum_{j>N}\|\Psi_j(z)\|_{\mathrm{HS}}^2\right)^{1/2}\\
&=
\|(I_{E_\Lambda}-Q_N)\Phi(z)^*\|_{\mathrm{HS}}
\,
\|(I_{E_\Lambda}-Q_N)\Psi(z)\|_{\mathrm{HS}}.
\end{aligned}
\]
Hence \(\sum_{j=1}^\infty\Phi_j\Psi_j\) converges absolutely and
uniformly on compact subsets of \(\D_I\), and
\[
\sum_{j=1}^\infty\Phi_j\Psi_j
=\Phi\Psi=I_2.
\]

By \eqref{eq:RI-multiplicative},
\[
\RI(f_j\star g_j)=\Phi_j\Psi_j.
\]
The spherical norm identity therefore gives, for \(q=x+Ly\) and
\(z=x+Iy\),
\[
|(f_j\star g_j)(q)|
\leq\|\Phi_j(z)\Psi_j(z)\|_{\mathrm{HS}}.
\]
It follows that \(\sum_{j=1}^\infty f_j\star g_j\) converges
absolutely and uniformly on compact subsets of \(\B\). Hence its sum
\(h\) is slice regular. On restriction to \(\D_I\), the splitting
components of the partial sums converge locally uniformly to those of
\(h\). Therefore
\[
\RI(h)
=
\sum_{j=1}^\infty\Phi_j\Psi_j
=I_2=\RI(1).
\]
The injectivity of \(\RI\) yields \(h=1\).

Finally, \eqref{eq:quat-corona-l2-bound} implies
\[
\sup_{j\in\Lambda}\|g_j\|_{H^\infty(\B)}
\leq\mathcal C(\delta),
\]
and \eqref{eq:C-def} gives the last estimate in
\eqref{eq:quat-corona-explicit-bound}.
\end{proof}

\begin{remark} 
\label{rem:determinant-comparison}
Suppose that \(\Lambda=\{1,\ldots,n\}\), and write
\(f_{j,I}=F_j+G_jJ\). By the Cauchy--Binet formula,
\(\det(\Phi\Phi^*)\) is the sum of the squared moduli of the
\(2\times2\) minors of
\[
\Phi=(\RI(f_1)\ \cdots\ \RI(f_n)),
\]
and this is precisely the determinant quantity appearing in
\cite[Theorem~1.1]{MR5081054}, written in the present
\(\#\)-notation. By \eqref{eq:operator-corona-equivalence},
\[
\delta^4
\leq
\det\bigl(\Phi(z)\Phi(z)^*\bigr)
\leq1.
\]
Moreover, by \eqref{eq:splitting-norm} and
\eqref{eq:quat-corona-hyp},
\[
\delta^2
\leq
\sum_{j=1}^n
\bigl(|F_j(z)|^2+|G_j(z)|^2\bigr)
\leq1.
\]
Since \(0<\delta<1\), both lower bounds are at least \(\delta^4\).
Thus the hypotheses of \cite[Theorem~1.1]{MR5081054} are recovered
with its parameter replaced by \(\delta^2\). 
\end{remark}

\section{\texorpdfstring{Quaternionic \(H^p\)-corona and Toeplitz corona theorems}
{Quaternionic Hp-corona and Toeplitz corona theorems}}
\label{sec:Hp-Toeplitz-corona}

We now turn to Hardy-space solvability and its operator-theoretic
consequences. The fixed-slice realization developed in the preceding
sections allows us first to prove the \(H^p\)-corona theorem for the
full range \(1\leq p<\infty\). We then use the Hilbert-space structure
of \(H^2(\B)\) to identify quaternionic multipliers with complex
matrix-valued multipliers on a fixed slice. Combining this realization
with Leech's factorization theorem yields a quaternionic factorization
theorem and, as a special case, the Toeplitz-corona theorem.

% ===============================================================
% 6.1. H^p-corona theorem
% ===============================================================

\subsection{\texorpdfstring{The \(H^p\)-corona theorem}
{The Hp-corona theorem}}

\begin{proof}[Proof of Theorem~\ref{thm:quaternionic-Hp-corona}]
Fix \(I,J\in\Sph\) with \(J\perp I\), and let
\[
\Phi
=
(\RI(f_j))_{j\in\Lambda}
\in
H^\infty
\bigl(
\D_I,\mathcal B(E_\Lambda,\CI^2)
\bigr)
\]
be the operator row associated with \((f_j)_{j\in\Lambda}\).
By Lemmas~\ref{lem:operator-row} and~\ref{lem:spherical-row},
\begin{equation}\label{eq:Hp-Phi-bound}
\delta^2 I_2
\leq
\Phi(z)\Phi(z)^*
\leq
I_2,
\qquad z\in\D_I.
\end{equation}

Write $
k_I=H+KJ$
 and set
\begin{equation}\label{eq:Hp-gamma}
\gamma
:=
\RI(k)e_1
=
\binom{H}{-K^\#},
\qquad
e_1=\binom10.
\end{equation}
For \(z=re^{I\theta}\),
\[
\|\gamma(z)\|_{\CI^2}^2
=
|H(z)|^2+|K(\overline z)|^2
\leq
|k(z)|^2+|k(\overline z)|^2.
\]

We shall use the elementary estimate
\begin{equation}\label{eq:reflection-Lp-estimate}
\left(
\frac1{2\pi}
\int_0^{2\pi}
\bigl(
F(\theta)^2+F(-\theta)^2
\bigr)^{p/2}
\,d\theta
\right)^{1/p}
\leq
\rho_p
\left(
\frac1{2\pi}
\int_0^{2\pi}
F(\theta)^p\,d\theta
\right)^{1/p}
\end{equation}
for every nonnegative measurable function \(F\). 
Consequently,
\begin{equation}\label{eq:gamma-Hp-bound}
\|\gamma\|_{H^p(\D_I,\CI^2)}
\leq
\rho_p
\|k\|_{H^p(\B)}.
\end{equation}

Since $
\delta_0^2I_2
\leq
\Phi(z)\Phi(z)^*
\leq
I_2$ 
and \(\delta_0^2\leq\e^{-1}\), we may apply
\cite[Theorem~0.1]{MR2158178} with
\[
E=E_\Lambda,
\qquad
E_*=\C^2,
\qquad
r=\dim E_*=2.
\]
There exists $
v=(v_j)_{j\in\Lambda}
\in H^p(\D_I,E_\Lambda)$ 
such that
\begin{equation}\label{eq:Hp-Phi-v}
\Phi v=\gamma
\end{equation}
and
\begin{equation}\label{eq:Hp-v-estimate}
\|v\|_{H^p(\D_I,E_\Lambda)}
\leq
\mathcal C_0(\delta)
\|\gamma\|_{H^p(\D_I,\CI^2)}.
\end{equation}

Write
\[
v_j=\binom{A_j}{B_j},
\qquad j\in\Lambda,
\]
and let \(h_j\) be the slice regular function determined by
\begin{equation}\label{eq:Hp-hj-definition}
h_{j,I}
=
A_j-B_j^\#J.
\end{equation}
Then
\begin{equation}\label{eq:Hp-RI-hj}
\RI(h_j)
=
\begin{pmatrix}
A_j&-B_j^\#\\
B_j&A_j^\#
\end{pmatrix},
\qquad
\RI(h_j)e_1=v_j.
\end{equation}

We next compare the vector-valued \(H^p\)-norms.
Let
\[
q=re^{L\theta},
\qquad
z=re^{I\theta},
\]
and choose a unit row vector $
\xi=(a,b)\in\CI^2$ 
corresponding to \(L\) as in
\eqref{eq:spherical-row-formula}.
Then
\[
\xi\RI(h_j)(z)
=
\bigl(
aA_j(z)+bB_j(z),
-aB_j^\#(z)+bA_j^\#(z)
\bigr).
\]
Hence, by the spherical row identity,
\begin{align}
\sum_{j\in\Lambda}|h_j(q)|^2
&=
\sum_{j\in\Lambda}
\|\xi\RI(h_j)(z)\|_{\CI^2}^2
\nonumber \leq
\|v(z)\|_{E_\Lambda}^2
+
\|v(\overline z)\|_{E_\Lambda}^2.
\label{eq:Hp-two-column-estimate}
\end{align}
Applying \eqref{eq:reflection-Lp-estimate} with $
F(\theta)
=
\|v(re^{I\theta})\|_{E_\Lambda}$ 
gives
\begin{equation}\label{eq:Hp-h-v-bound}
\|\mathbf h\|_{H^p(\B;\ell^2(\Lambda))}
\leq
\rho_p
\|v\|_{H^p(\D_I,E_\Lambda)}.
\end{equation}
Combining
\eqref{eq:gamma-Hp-bound},
\eqref{eq:Hp-v-estimate},
and
\eqref{eq:Hp-h-v-bound}
gives precisely
\eqref{eq:Hp-corona-estimate}.

If \(\Lambda\) is finite, multiplicativity gives
\[
\begin{aligned}
\RI\left(
\sum_{j\in\Lambda}f_j\star h_j
\right)e_1
&=
\sum_{j\in\Lambda}
\RI(f_j)\RI(h_j)e_1\\
&=
\Phi v
=
\gamma
=
\RI(k)e_1.
\end{aligned}
\]
Since \(h\mapsto\RI(h)e_1\) is injective,
\[
\sum_{j\in\Lambda}f_j\star h_j=k.
\]

Suppose now that \(\Lambda=\mathbb N\), and let \(Q_N\) denote the
orthogonal projection of \(E_\Lambda\) onto its first \(N\)
coordinate blocks. Since \(1\leq p<\infty\),
\[
Q_Nv\longrightarrow v
\qquad
\text{in }H^p(\D_I,E_\Lambda).
\]
Since multiplication by \(\Phi\) is bounded,
\[
\Phi Q_Nv
\longrightarrow
\Phi v=\gamma
\qquad
\text{in }H^p(\D_I,\CI^2).
\]
Put
\[
s_N
:=
\sum_{j=1}^Nf_j\star h_j.
\]
Then
\[
\RI(s_N-k)e_1
=
\Phi Q_Nv-\gamma.
\]
Applying the single-function version of
\eqref{eq:Hp-two-column-estimate}, followed by
\eqref{eq:reflection-Lp-estimate}, gives
\[
\|s_N-k\|_{H^p(\B)}
\leq
\rho_p
\|\Phi Q_Nv-\gamma\|_{H^p(\D_I,\CI^2)}
\longrightarrow0.
\]
Thus the series in
\eqref{eq:Hp-corona-equation} converges to \(k\) in
\(H^p(\B)\).
\end{proof}

% ===============================================================
% 6.2. Hilbert-space specialization
% ===============================================================

\subsection{Leech factorization and the Toeplitz corona theorem}

We now specialize the fixed-slice realization to the Hilbert-space case \(p=2\); recall the coefficient description of \(H^2(\B)\) from Section~\ref{sec:preliminaries}. To formulate the operator-theoretic consequences of the corona
problem, we identify quaternionic \(H^2(\B)\) with a vector-valued
complex Hardy space on a fixed slice.  This realization intertwines
quaternionic \(\star\)-multipliers with ordinary analytic matrix
multipliers and will be used in the Leech factorization theorem below.

\begin{lemma} 
\label{lem:H2-first-column-unitary}
Fix \(I,J\in\Sph\) with \(J\perp I\), and let
\[
\pi_I:\Hq\longrightarrow\CI,
\qquad
\pi_I(\alpha+\beta J)=\alpha.
\]
Regard \(H^2(\B)\) as a complex Hilbert space over \(\CI\) with
inner product
\[
\langle h,u\rangle_I
:=
\pi_I
\bigl(
\langle h,u\rangle_{H^2(\B)}
\bigr).
\]
Then
\[
U:
H^2(\B)
\longrightarrow
H^2(\D_I,\CI^2),
\qquad
Uh:=\RI(h)e_1,
\]
is unitary. Likewise,
\[
U_\Lambda:
H^2(\B;\ell^2(\Lambda))
\longrightarrow
H^2(\D_I,E_\Lambda),
\]
defined by
\[
U_\Lambda(h_j)_{j\in\Lambda}
=
\bigl(
\RI(h_j)e_1
\bigr)_{j\in\Lambda},
\]
is unitary.

Moreover, for every \(f\in H^\infty(\B)\) and
\(h\in H^2(\B)\),
\begin{equation}\label{eq:H2-first-column-intertwining}
U(f\star h)
=
\RI(f)\,Uh.
\end{equation}
\end{lemma}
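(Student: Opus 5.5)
The argument works at the level of Taylor coefficients. Write \(h(q)=\sum_{m\ge0}q^m a_m\) with \(a_m=\alpha_m+\beta_m J\), where \(\alpha_m,\beta_m\in\CI\).

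\textbf{Computing \(U\) on coefficients.} By \eqref{eq:Phi-coeff}, the first column of \(\chi_I(a_m)\) is \((\alpha_m,-\overline{\beta_m})^{\mathsf T}\). Hence
\[
Uh(z)=\sum_{m\ge0}z^m\binom{\alpha_m}{-\overline{\beta_m}},
\]
and therefore
\[
\|Uh\|_{H^2(\D_I,\CI^2)}^2=\sum_m\bigl(|\alpha_m|^2+|\beta_m|^2\bigr)=\sum_m|a_m|^2=\|h\|_{H^2(\B)}^2 .
\]

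\textbf{Linearity and surjectivity.} \(U\) is additive. For \(\CI\)-linearity, note that \(H^2(\B)\) is a right \(\Hq\)-module, and scalars \(\lambda\in\CI\) act by \(h\mapsto h\lambda\). We have \(a_m\lambda=\alpha_m\lambda+\beta_m\overline\lambda J\), since \(J\lambda=\overline\lambda J\). The first column of \(\chi_I(a_m\lambda)\) is then \((\alpha_m\lambda,-\overline{\beta_m}\lambda)^{\mathsf T}\), which equals \(\lambda\) times the first column of \(\chi_I(a_m)\). So \(U\) is \(\CI\)-linear. Surjectivity follows because any coefficient sequence \((\alpha_m,\gamma_m)\in\ell^2(\CI^2)\) comes from \(a_m=\alpha_m-\overline{\gamma_m}J\).

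\textbf{Unitarity.} \(U\) is a \(\CI\)-linear surjective isometry, so by polarization it preserves the \(\CI\)-valued inner product \(\langle\cdot,\cdot\rangle_I\). One point needs checking: that \(\langle\cdot,\cdot\rangle_I\) really is a \(\CI\)-inner product whose norm is the \(H^2(\B)\) norm. We have \(\pi_I(\overline a_m a_m)=|a_m|^2\). Sesquilinearity uses \(\pi_I(x\lambda)=\pi_I(x)\lambda\) for \(\lambda\in\CI\), which holds because \(\beta J\lambda=\beta\overline\lambda J\) keeps the \(J\)-part in \(\CI J\). One could also verify the inner products directly: the \(\CI\)-part of \(\overline{a_m}b_m\) is \(\overline{\alpha_m}\alpha'_m+\beta_m\overline{\beta'_m}\), and this matches \(\langle (\alpha_m,-\overline{\beta_m}),(\alpha'_m,-\overline{\beta'_m})\rangle_{\CI^2}\) under the paper's right-linear convention. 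The statement for \(U_\Lambda\) follows by applying \(U\) coordinatewise and using \eqref{eq:sequence-H2-norm}.

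\textbf{Intertwining.} Let \(f(q)=\sum q^m c_m\). By \eqref{eq:star-product-definition}, \eqref{eq:Phi-coeff} and the multiplicativity of \(\chi_I\), exactly as in the proof of Proposition~\ref{prop:fixed-slice-realization},
\[
\RI(f\star h)=\sum_m z^m\sum_{k\le m}\chi_I(c_k)\chi_I(a_{m-k})=\RI(f)\RI(h)
\]
as formal power series, and these converge on \(\D_I\) since \(h\in H^2\) and \(f\) is bounded. Multiplying on the right by \(e_1\) gives \(U(f\star h)=\RI(f)Uh\). Since \(\RI(f)\) is a bounded analytic multiplier on \(H^2(\D_I,\CI^2)\) with norm \(\|f\|_\infty\), the identity also shows \(f\star h\in H^2(\B)\).

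\textbf{Main obstacle.} The only delicate point is the bookkeeping of the \(\CI\)-structure: choosing right scalar multiplication, and verifying that \(\pi_I\) of the quaternionic inner product coincides with the standard inner product under the map \(a\mapsto(\alpha,-\overline\beta)\). The remaining steps are coefficient computations.
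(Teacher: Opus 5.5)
Your proposal is correct and follows essentially the same route as the paper. You compute $Uh$ coefficientwise via the first column $(\alpha_m,-\overline{\beta_m})^{\mathsf T}$ of $\chi_I(a_m)$, match $\pi_I(\overline{a_m}b_m)$ with the $\CI^2$ inner product, get surjectivity from $a_m=\alpha_m-\overline{\gamma_m}J$ (the paper's $h_I=A-B^\#J$), treat $U_\Lambda$ coordinatewise, and obtain the intertwining from multiplicativity of $\RI$. Your extra checks fill in points the paper leaves implicit: $\CI$-linearity under right scalar multiplication, and that the Cauchy-product proof of multiplicativity still applies when $h$ lies only in $H^2(\B)$.
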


\begin{proof}
Write
\[
h(q)=\sum_{m\geq0}q^ma_m,
\qquad
u(q)=\sum_{m\geq0}q^mb_m,
\]
where
\[
a_m=\alpha_m+\beta_mJ,
\qquad
b_m=\gamma_m+\delta_mJ,
\]
with
\[
\alpha_m,\beta_m,\gamma_m,\delta_m\in\CI.
\]
The \(m\)-th coefficient of \(Uh\) is
\[
\binom{\alpha_m}{-\overline{\beta_m}}.
\]
Since
\[
\pi_I(\overline{a_m}b_m)
=
\overline{\alpha_m}\gamma_m
+
\beta_m\overline{\delta_m},
\]
the coefficient inner products give
\[
\langle Uh,Uu\rangle_{H^2(\D_I,\CI^2)}
=
\langle h,u\rangle_I.
\]
Thus \(U\) is isometric.

It is also onto. Indeed, if
\[
v=\binom{A}{B}
\in
H^2(\D_I,\CI^2),
\]
then the slice regular function determined by $h_I=A-B^\#J$
 satisfies
\[
Uh=v.
\]
The assertion for \(U_\Lambda\) follows componentwise.
Finally,
\eqref{eq:H2-first-column-intertwining}
follows from the multiplicativity of \(\RI\).
\end{proof}

For \(f\in H^\infty(\B)\), let
\[
M_fh:=f\star h,
\qquad
h\in H^2(\B).
\]
Recall the row multiplier
\(\mathcal M_{\mathbf f}\) defined in
\eqref{eq:row-multiplier-def}.

Under the upper bound
\[
\sum_{j\in\Lambda}|f_j(q)|^2\leq1,
\qquad q\in\B,
\]
Lemma~\ref{lem:operator-row} gives $
\|\Phi\|_\infty\leq1$. 
Lemma~\ref{lem:H2-first-column-unitary} then gives the unitary
intertwining relation
\begin{equation}\label{eq:row-multiplier-intertwining}
U\mathcal M_{\mathbf f}
=
M_\Phi U_\Lambda,
\end{equation}
where $
M_\Phi:
H^2(\D_I,E_\Lambda)
\longrightarrow
H^2(\D_I,\CI^2)$ 
denotes multiplication by \(\Phi\).
In particular,
\[
\|\mathcal M_{\mathbf f}\|\leq1.
\]

We shall use the following multiplier consequence of
Leech's factorization theorem; see
\cite[p.~71, Theorem]{Leech2014}.

\begin{theorem}[Multiplier form of Leech's factorization theorem]
\label{thm:Leech-multiplier}
Let \(\mathcal U,\mathcal V,\mathcal Y\) be complex Hilbert spaces, and let
\[
F\in H^\infty\bigl(\D_I,\mathcal B(\mathcal V,\mathcal Y)\bigr),
\qquad
G\in H^\infty\bigl(\D_I,\mathcal B(\mathcal U,\mathcal Y)\bigr).
\]
Then
\begin{equation}\label{eq:Leech-multiplier-inequality}
M_GM_G^*
\leq
M_FM_F^*
\end{equation}
if and only if there exists $
X\in H^\infty\bigl(\D_I,\mathcal B(\mathcal U,\mathcal V)\bigr)$ 
such that
\begin{equation}\label{eq:Leech-multiplier-factorization}
G=FX,
\qquad
\|X\|_\infty\leq1.
\end{equation}
\end{theorem}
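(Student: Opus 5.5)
The statement is the classical Leech theorem, quoted from \cite{Leech2014}; only its translation into multiplier language needs checking. The Leech theorem concerns analytic Toeplitz operators, that is, multiplication operators on vector-valued Hardy spaces. We identify \(\CI\) with \(\C\) and \(\D_I\) with \(\D\). The multipliers \(M_F:H^2(\D_I,\mathcal V)\to H^2(\D_I,\mathcal Y)\) and \(M_G:H^2(\D_I,\mathcal U)\to H^2(\D_I,\mathcal Y)\) are then exactly the bounded operators that intertwine the unilateral shifts. Moreover, \(\|M_X\|=\|X\|_\infty\) for \(X\in H^\infty(\D_I,\mathcal B(\mathcal U,\mathcal V))\).

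The easy direction comes first. Suppose \(G=FX\) with \(\|X\|_\infty\le1\). Then \(M_G=M_FM_X\) and \(M_XM_X^*\le I\). Conjugating this inequality by \(M_F\) gives
\[
M_GM_G^*=M_FM_XM_X^*M_F^*\le M_FM_F^*.
\]

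The substantive direction starts from \(M_GM_G^*\le M_FM_F^*\). Apply Douglas' lemma to obtain a contraction \(C:H^2(\mathcal U)\to H^2(\mathcal V)\) with \(M_G=M_FC\), which we normalize by \(\ker C^*\supseteq\ker M_F\). This \(C\) need not commute with the shift, so it is not yet a multiplier. The key step, which I expect to be the main obstacle, is to replace \(C\) by a shift-intertwining contraction. I would follow the standard commutant-lifting route. Let \(S\) denote the shift. The relations \(M_FS_{\mathcal V}=S_{\mathcal Y}M_F\) and \(M_GS_{\mathcal U}=S_{\mathcal Y}M_G\) give
\[
M_F^*S_{\mathcal Y}^*=S_{\mathcal V}^*M_F^*,
\]
and similarly for \(G\). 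Define a contraction on \(\overline{\operatorname{ran}}M_F^*\) by \(M_F^*y\mapsto M_G^*y\); it is well defined and contractive by the hypothesis. It intertwines the compressions of the backward shifts \(S_{\mathcal V}^*\) and \(S_{\mathcal U}^*\), since \(\overline{\operatorname{ran}}M_F^*\) is \(S_{\mathcal V}^*\)-invariant. By the commutant lifting theorem of Sz.-Nagy and Foias, it extends to a contraction \(Z:H^2(\mathcal V)\to H^2(\mathcal U)\) with \(ZS_{\mathcal V}^*=S_{\mathcal U}^*Z\). Then \(Z^*\) commutes with the forward shifts and is therefore \(M_X\) with \(\|X\|_\infty\le1\). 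Finally, \(ZM_F^*=M_G^*\) yields \(M_FM_X=M_G\), that is, \(G=FX\) pointwise, which is verified by applying both sides to constant vectors.

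Since the paper cites Leech's original article, it suffices to remark that this is precisely the content of that theorem. The proof therefore amounts to recording the dictionary between analytic Toeplitz operators and \(H^\infty\) multipliers, together with the norm identity \(\|M_X\|=\|X\|_\infty\). That identity follows by testing \(M_X^*\) on the kernel vectors \(k_w\otimes e\), since
\[
M_X^*(k_w e)=k_w X(w)^*e.
\]
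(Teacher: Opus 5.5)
Your argument is correct, but it takes a different route from the paper's proof.

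The paper does not reprove Leech's theorem. It places $c^{-1}M_F$ and $c^{-1}M_G$ in the $(1,2)$- and $(1,3)$-blocks of operators on the single space $H^2(\D_I,\mathcal Y\oplus\mathcal V\oplus\mathcal U)$. It then applies Leech's theorem in its original form, for operators on one Hilbert space commuting with multiplication by $z$. Since only the $(1,2)$-block of the first operator is nonzero, the $(1,3)$-block of the product is $c^{-1}M_F$ times the $(2,3)$-block of the contractive factor, and that block is $M_X$.

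You instead essentially reprove Leech's theorem from the Sz.-Nagy--Foias commutant lifting theorem. The hypothesis makes $M_F^*y\mapsto M_G^*y$ a well-defined contraction on the $S_{\mathcal V}^*$-invariant subspace $\overline{\operatorname{ran}}\,M_F^*$, and this map intertwines the backward shifts. Lifting it gives a shift-intertwining contraction whose adjoint is $M_X$.

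What each approach buys:
\begin{itemize}
\item The paper's route is shorter and stays with the cited reference. Its block embedding is exactly the translation step that your closing remark passes over when it says the statement is ``precisely the content'' of Leech's theorem, because Leech's theorem is formulated on a single Hilbert space.
\item Your route is self-contained, modulo commutant lifting and the description of the shift commutant. It needs no scaling constant $c$, and it shows directly where the operator inequality is used.
\end{itemize}

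Three minor points:
\begin{itemize}
\item The Douglas factor $C$ you introduce is never used. The map $M_F^*y\mapsto M_G^*y$ is just $C^*$ restricted to $\overline{\operatorname{ran}}\,M_F^*$.
\item The kernel-vector test gives only $\|X\|_\infty\le\|M_X\|$. Your easy direction needs the reverse inequality, which follows from the elementary integral-mean estimate.
\item $S_{\mathcal V}$ need not be the minimal isometric dilation of its compression to $\overline{\operatorname{ran}}\,M_F^*$. This does not matter: lifting into the smallest $S_{\mathcal V}$-invariant subspace containing $\overline{\operatorname{ran}}\,M_F^*$ already gives an operator into $H^2(\D_I,\mathcal V)$ that intertwines the shifts.
\end{itemize}
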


\begin{proof}
The implication from the factorization to the operator inequality is
immediate. Conversely, identify
\[
H^2(\D_I,\mathcal Y)
\oplus
H^2(\D_I,\mathcal V)
\oplus
H^2(\D_I,\mathcal U)
\]
with
\[
H^2\bigl(
\D_I,\mathcal Y\oplus\mathcal V\oplus\mathcal U
\bigr).
\]
Choose a positive constant \(c\) sufficiently large so that the block
operators obtained from \(c^{-1}M_G\) and \(c^{-1}M_F\) are
contractions. Place these operators in the \((1,3)\)- and
\((1,2)\)-blocks, respectively. They commute with multiplication by
\(z\), and the inequality
\eqref{eq:Leech-multiplier-inequality}
gives the corresponding inequality between the two block operators.

Leech's theorem
\cite[p.~71, Theorem]{Leech2014}
therefore gives a contractive factor which also commutes with
multiplication by \(z\). Its \((2,3)\)-block is multiplication by some
\[
X\in
H^\infty\bigl(
\D_I,\mathcal B(\mathcal U,\mathcal V)
\bigr),
\qquad
\|X\|_\infty\leq1.
\]
Consequently,
\[
M_G=M_FM_X,
\]
and hence \(G=FX\).
\end{proof}

\begin{theorem}[Quaternionic Leech factorization theorem]
\label{thm:quaternionic-Leech-factorization}
Let $
f_j\in H^\infty(\B),
j\in\Lambda$ 
satisfy
\begin{equation}\label{eq:Leech-row-upper}
\sum_{j\in\Lambda}
|f_j(q)|^2
\leq1,
\qquad q\in\B,
\end{equation}
and let $
\mathcal M_{\mathbf f}:
H^2(\B;\ell^2(\Lambda))
\longrightarrow
H^2(\B)$  
be the associated row multiplier.
Let $
h\in H^\infty(\B)$ and $ \gamma>0$. 
Then the following statements are equivalent.

\begin{enumerate}
\item[\textup{(i)}]
The inequality
\begin{equation}\label{eq:quaternionic-Leech-domination}
M_hM_h^*
\leq
\gamma^2
\mathcal M_{\mathbf f}
\mathcal M_{\mathbf f}^*
\end{equation}
holds on \(H^2(\B)\).

\item[\textup{(ii)}]
There exist
\[
g_j\in H^\infty(\B),
\qquad
j\in\Lambda,
\]
such that
\begin{equation}\label{eq:quaternionic-Leech-factorization}
h
=
\sum_{j\in\Lambda}
f_j\star g_j,
\end{equation}
and the analytic block function
\begin{equation}\label{eq:quaternionic-Leech-Psi}
\Psi_{\mathbf g}(z)
:=
\begin{pmatrix}
\RI(g_1)(z)\\
\RI(g_2)(z)\\
\vdots
\end{pmatrix}
:
\CI^2
\longrightarrow
E_\Lambda
\end{equation}
belongs to $
H^\infty
\bigl(
\D_I,\mathcal B(\CI^2,E_\Lambda)
\bigr)$ 
and satisfies
\begin{equation}\label{eq:quaternionic-Leech-Psi-bound}
\|\Psi_{\mathbf g}\|_\infty
\leq
\gamma.
\end{equation}
\end{enumerate}

If \(\Lambda=\mathbb N\), the series in
\eqref{eq:quaternionic-Leech-factorization}
converges absolutely and uniformly on compact subsets of \(\B\).

In particular, when \(\Lambda\) is finite,
\eqref{eq:quaternionic-Leech-domination}
gives a Toeplitz-operator characterization, with quantitative
control, of membership of \(h\) in the right ideal generated by
\(f_1,\ldots,f_n\).
\end{theorem}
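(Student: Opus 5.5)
Transport everything to the fixed slice with the unitaries $U$ and $U_\Lambda$ of Lemma~\ref{lem:H2-first-column-unitary}. Apply Theorem~\ref{thm:Leech-multiplier} there, with $\mathcal Y=\CI^2$, $\mathcal V=E_\Lambda$, $\mathcal U=\CI^2$, $F=\gamma\Phi$ and $G=\RI(h)$. Then use the fixed-point symmetry, as in the proof of Theorem~\ref{thm:real-matrix-corona}, to make the Leech factor a quaternionic object.

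First, by \eqref{eq:H2-first-column-intertwining} we have $UM_h=M_{\RI(h)}U$, and by \eqref{eq:row-multiplier-intertwining} we have $U\mathcal M_{\mathbf f}=M_\Phi U_\Lambda$. Since $U,U_\Lambda$ are unitary for the $\CI$-inner products, adjoints taken with respect to $\langle\cdot,\cdot\rangle_I$ transform correctly. I need to check that the quaternionic operator inequality \eqref{eq:quaternionic-Leech-domination} for right-$\Hq$-linear operators is equivalent to the same inequality relative to the real part of the inner product, hence relative to $\pi_I$ of it. This holds because $\langle Tx,x\rangle$ is real for self-adjoint $T$, and the $\CI$-adjoint of a right-$\Hq$-linear operator agrees with its quaternionic adjoint, since $\pi_I$ of the inner product determines the full inner product. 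So (i) becomes $M_{\RI(h)}M_{\RI(h)}^*\le \gamma^2M_\Phi M_\Phi^*$ on $H^2(\D_I,\CI^2)$.

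For (i)$\Rightarrow$(ii), Theorem~\ref{thm:Leech-multiplier} gives $X\in H^\infty(\D_I,\mathcal B(\CI^2,E_\Lambda))$ with $\RI(h)=\Phi(\gamma X)$ and $\|X\|_\infty\le1$. Set $\Psi_0=\gamma X$ and symmetrize: $\Psi=\tfrac12(\Psi_0+\Theta_{\Lambda,2}(\Psi_0))$. By \eqref{eq:Theta-product}, together with $\Theta_{2,\Lambda}(\Phi)=\Phi$ and $\Theta_{2,2}(\RI(h))=\RI(h)$, we get $\Phi\Theta_{\Lambda,2}(\Psi_0)=\RI(h)$, hence $\Phi\Psi=\RI(h)$. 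Since $\Theta_{\Lambda,2}$ is an isometric involution, $\|\Psi\|_\infty\le\gamma$ by convexity. As in the proof of Theorem~\ref{thm:quat-corona}, the blocks $\Psi_j=P_j\Psi$ are $\Theta_{2,2}$-fixed, so $\Psi_j=\RI(g_j)$ with $g_j\in H^\infty(\B)$, and $\Psi_{\mathbf g}=\Psi$. The identity \eqref{eq:quaternionic-Leech-factorization}, together with its absolute locally uniform convergence when $\Lambda=\mathbb N$, follows verbatim from the tail argument \eqref{eq:countable-Phi-tail}--\eqref{eq:countable-Psi-tail}, the spherical norm bound, and the injectivity of $\RI$.

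For (ii)$\Rightarrow$(i), the same convergence argument gives $\Phi\Psi_{\mathbf g}=\sum_j\RI(f_j)\RI(g_j)=\RI(h)$. The easy direction of Theorem~\ref{thm:Leech-multiplier} then gives $M_{\RI(h)}M_{\RI(h)}^*=M_\Phi M_{\Psi_{\mathbf g}}M_{\Psi_{\mathbf g}}^*M_\Phi^*\le\gamma^2M_\Phi M_\Phi^*$, and we transport this back through $U$.

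The main obstacle I expect is the adjoint bookkeeping in the first step: verifying that $U^*$ composed with the complex adjoint yields the quaternionic adjoint, so that the positivity orders correspond. I would handle it by checking $\mathrm{Re}\langle Tx,x\rangle_{H^2(\B)}=\mathrm{Re}\langle UTU^{-1}Ux,Ux\rangle$.
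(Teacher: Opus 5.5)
Your proposal is correct and follows the paper's proof essentially step for step. The shared steps are the unitary transfer via $U$ and $U_\Lambda$, Theorem~\ref{thm:Leech-multiplier} applied with $F=\gamma\Phi$ and $G=\RI(h)$, the averaging symmetrization $\Psi=\tfrac12(\Psi_0+\Theta_{\Lambda,2}(\Psi_0))$ justified by \eqref{eq:Theta-product}, the block-wise recovery $\Psi_j=\RI(g_j)$, the Hilbert--Schmidt tail argument, and the easy converse through $M_{\RI(h)}=M_\Phi M_{\Psi_{\mathbf g}}$. Your explicit checks fill in points the paper only asserts: that quaternionic adjoints and positivity survive restriction of scalars to $\CI$, and that $\|\Psi\|_\infty\le\gamma$ because $\Theta_{\Lambda,2}$ is an isometric involution. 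One small misattribution: the symmetrization you actually use is the averaging from the paper's Leech proof, not the $\widetilde\omega=-\mathcal J_\Lambda\omega^\#$ construction in Theorem~\ref{thm:real-matrix-corona}.
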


\begin{proof}
Fix \(I,J\in\Sph\) with \(J\perp I\), and put
\[
\Phi
:=
(\RI(f_j))_{j\in\Lambda},
\qquad
H:=\RI(h).
\]
By Lemma~\ref{lem:H2-first-column-unitary} and
\eqref{eq:row-multiplier-intertwining},
after restricting scalars from \(\Hq\) to \(\CI\),
\(\mathcal M_{\mathbf f}\) is unitarily equivalent to
\[
M_\Phi:
H^2(\D_I,E_\Lambda)
\longrightarrow
H^2(\D_I,\CI^2),
\]
while \(M_h\) is unitarily equivalent to multiplication by
\(H=\RI(h)\):
\[
UM_h=M_HU.
\]
The quaternionic adjoints agree, after restriction of scalars, with
the adjoints for the corresponding complex Hilbert structures.
Consequently,
\eqref{eq:quaternionic-Leech-domination}
is equivalent to
\begin{equation}\label{eq:complex-Leech-domination}
M_HM_H^*
\leq
\gamma^2
M_\Phi M_\Phi^*.
\end{equation}
 
Assume first that \textup{(i)} holds.
Apply Theorem~\ref{thm:Leech-multiplier} with
\[
\mathcal U=\mathcal Y=\CI^2,
\qquad
\mathcal V=E_\Lambda,
\qquad
F=\gamma\Phi,
\qquad
G=H.
\]
By \eqref{eq:complex-Leech-domination}, there exists
\[
X\in
H^\infty
\bigl(
\D_I,\mathcal B(\CI^2,E_\Lambda)
\bigr),
\qquad
\|X\|_\infty\leq1,
\]
such that $
H=\gamma\Phi X$. 
Thus
\[
\Psi_0:=\gamma X
\]
satisfies
\begin{equation}\label{eq:Leech-Psi0}
\Phi\Psi_0=H,
\qquad
\|\Psi_0\|_\infty\leq\gamma.
\end{equation}

The factor \(\Psi_0\) need not satisfy the quaternionic
fixed-point symmetry. Since
\[
\Theta_{2,\Lambda}(\Phi)=\Phi,
\qquad
\Theta_{2,2}(H)=H,
\]
define
\begin{equation}\label{eq:Leech-symmetrization}
\Psi
:=
\frac12
\left(
\Psi_0+
\Theta_{\Lambda,2}(\Psi_0)
\right).
\end{equation}
Applying the product identity
\eqref{eq:Theta-product} to
\(\Phi\Psi_0=H\) gives $
\Phi
\Theta_{\Lambda,2}(\Psi_0)
=
H$.

Therefore
\begin{equation}\label{eq:Leech-fixed-factor}
\Phi\Psi=H,
\qquad
\Theta_{\Lambda,2}(\Psi)=\Psi,
\qquad
\|\Psi\|_\infty\leq\gamma.
\end{equation}

For \(j\in\Lambda\), let $
P_j:E_\Lambda\longrightarrow\CI^2$
 be the \(j\)-th coordinate projection and set $
\Psi_j:=P_j\Psi$. 
As in the proof of Theorem~\ref{thm:quat-corona},
the fixed-point relation for \(\Psi\) implies
\[
\Theta_{2,2}(\Psi_j)=\Psi_j.
\]
Hence, by
Proposition~\ref{prop:fixed-slice-realization}
and \eqref{eq:fixed-point-range}, there is a unique $
g_j\in H^\infty(\B)$ 
such that $
\Psi_j=\RI(g_j)$. 
Thus
\[
\Psi
=
\Psi_{\mathbf g}.
\]

Moreover,
\[
\begin{aligned}
H
&=
\Phi\Psi =
\sum_{j\in\Lambda}
\RI(f_j)\RI(g_j)=
\sum_{j\in\Lambda}
\RI(f_j\star g_j).
\end{aligned}
\]
If \(\Lambda\) is finite, then
\eqref{eq:Leech-fixed-factor} and the multiplicativity of \(\RI\)
give
\[
H
=
\sum_{j\in\Lambda}
\RI(f_j\star g_j).
\]
The injectivity of \(\RI\) therefore yields
\[
h
=
\sum_{j\in\Lambda}
f_j\star g_j.
\]

Suppose now that \(\Lambda=\mathbb N\).
Let \(Q_N\) be the orthogonal projection of \(E_\Lambda\) onto its
first \(N\) coordinate blocks. On every compact subset of \(\D_I\),
the same Hilbert--Schmidt tail argument used in the proof of
Theorem~\ref{thm:quat-corona} gives
\[
\sum_{j=1}^{\infty}
\RI(f_j)\RI(g_j)
\]
absolutely and locally uniformly. By the spherical norm identity,
\[
\sum_{j=1}^{\infty}
f_j\star g_j
\]
therefore converges absolutely and uniformly on compact subsets of
\(\B\). Its sum has matrix realization \(H=\RI(h)\), and hence,
by injectivity of \(\RI\), equals \(h\).
This proves \textup{(ii)}.

Conversely, suppose that \textup{(ii)} holds.
Then
\[
H
=
\Phi\Psi_{\mathbf g}.
\]
Consequently,
\[
M_H
=
M_\Phi M_{\Psi_{\mathbf g}}.
\]
Since
\[
\|M_{\Psi_{\mathbf g}}\|
\leq
\|\Psi_{\mathbf g}\|_\infty
\leq
\gamma,
\]
we obtain
\[
\begin{aligned}
M_HM_H^*
&=
M_\Phi
M_{\Psi_{\mathbf g}}
M_{\Psi_{\mathbf g}}^*
M_\Phi^* \leq
\gamma^2
M_\Phi M_\Phi^*.
\end{aligned}
\]
Transferring this inequality back through the unitary
first-column realization gives
\[
M_hM_h^*
\leq
\gamma^2
\mathcal M_{\mathbf f}
\mathcal M_{\mathbf f}^*,
\]
which proves \textup{(i)}.
\end{proof}

% ===============================================================
% 6.4. Toeplitz corona as a consequence of Leech factorization
% ===============================================================
 
\begin{proof}[Proof of Theorem~\ref{thm:quaternionic-toeplitz-corona}]
Apply Theorem~\ref{thm:quaternionic-Leech-factorization} with
\[
h\equiv1,
\qquad
\gamma=\eta^{-1}.
\]
Since \(M_1M_1^*=I\), condition \textup{(i)} in
Theorem~\ref{thm:quaternionic-Leech-factorization}
is precisely
\eqref{eq:full-toeplitz-lower}.
Its factorization conclusion gives
\eqref{eq:full-toeplitz-bezout}, while the corresponding norm estimate
is exactly
\eqref{eq:full-toeplitz-Psi-bound}.
The convergence assertion for \(\Lambda=\mathbb N\) follows from
Theorem~\ref{thm:quaternionic-Leech-factorization}.
\end{proof}

\subsection*{Conflict of interest}
The authors have no conflict of interest to declare that are relevant to the content of this article. 
\subsection*{Data availability statement}
No data, models, or code were generated or used for the research described in the article.%Data sharing is not applicable to this article as no new data were created or analyzed in this study.
 
%%%%%%%%%%%%%%%%%%%%%%%%%%%%%%%%%%%%%%%%%%%%%%%%%%
\bibliographystyle{amsplain}
\bibliography{references}

\medskip

\noindent
School of Mathematical Sciences, Dalian University of Technology,
Dalian, Liaoning 116024, P. R. China

\noindent
Email address: \texttt{linzhaopeng2606@163.com}

\medskip

\noindent
School of Mathematical Sciences, Dalian University of Technology,
Dalian, Liaoning 116024, P. R. China

\noindent
Email address: \texttt{lyfdlut@dlut.edu.cn}

\medskip

\noindent
School of Mathematical Sciences, Dalian University of Technology,
Dalian, Liaoning 116024, P. R. China

\noindent
Email address: \texttt{zuchao@dlut.edu.cn}
\end{document}